\renewcommand{\d}{\mathrm{d}}
\newcommand{\D}{\mathrm{D}}
\newcommand{\e}{\mathrm{e}}
\newtheorem{Thm}{Theorem}[section]
\newtheorem{Lem}[Thm]{Lemma}
\newtheorem{Prop}[Thm]{Proposition}
\newtheorem{Rem}[Thm]{Remark}
\newtheorem{Def}[Thm]{Definition}
\newtheorem{Ex}[Thm]{Example}
\def\C{\mathbb{C}}
\def\Z{\mathbb{Z}}
\def\H{\mathbb{H}}
\def\fb{\mathfrak{b}}
\def\sl{\mathfrak{sl}}
\def\osp{\mathfrak{osp}}
\def\cE{\mathcal{E}}
\def\cF{\mathcal{F}}
\def\cH{\mathcal{H}}
\def\cK{\mathcal{K}}
\def\cR{\mathcal{R}}
\def\cU{\mathcal{U}}
\def\cW{\mathcal{W}}
\def\a{\alpha}
\def\b{\beta}
\def\c{\gamma}
\def\D{\Delta}
\def\d{\delta}
\def\e{\epsilon}
\def\h{\theta}
\def\l{\lambda}
\def\s{\sigma}
\def\bC{\textbf{C}}
\def\bE{\textbf{E}}
\def\bF{\textbf{F}}
\def\bL{\textbf{L}}
\def\bo{\textbf{o}}
\def\bQ{\textbf{Q}}
\def\bR{\textbf{R}}
\def\bT{\textbf{T}}
\def\=>{\Longrightarrow}
\def\to{\longrightarrow}
\def\ox{\otimes}
\def\o+{\oplus}
\def\bo+{\bigoplus}
\def\<{\langle}
\def\>{\rangle}
\def\({\left(}
\def\){\right)}
\def\oo{\infty}
\def\cong{\equiv}
\def\^{\wedge}
\def\+{\dagger}
\def\inv{^{-1}}
\def\no{\nonumber}
\def\tab{\;\;\;\;\;\;}
\newcommand{\til}[1]{\widetilde{#1}}
\newcommand{\what}[1]{\widehat{#1}}
\newcommand{\veca}[1]{\begin{pmatrix}#1 \end{pmatrix}}
\newcommand{\case}[2][cccccccccccccccccccccccccccccccccccccccccc]{\left\{\begin{array}{#1}#2 \end{array}\right.}
\newcommand{\Eq}[1]{\begin{align}#1\end{align}}
\newcommand{\Eqn}[1]{\begin{align*}#1\end{align*}}
\begin{document}
\title{Q-operator and fusion relations for $U_q(C^{(2)}(2))$}
\author{Ivan Chi-Ho Ip}
\author{Anton M. Zeitlin}
\address{\newline
Ivan Chi-Ho Ip,\newline
Kavli Institute for the Physics and \newline
Mathematics of the Universe (WPI),\newline
University of Tokyo,\newline
Kashiwa, Chiba,\newline
277-8583 Japan\newline
ivan.ip@ipmu.jp\newline
http://member.ipmu.jp/ivan.ip}
\address{ 
\newline 
Anton M. Zeitlin,\newline
Department of Mathematics,\newline
Columbia University,\newline
Room 509, MC 4406,\newline
2990 Broadway,\newline
New York, NY 10027,\newline
Max Planck Institute for Mathematics,\newline
Vivatsgasse 7, Bonn, 53111, Germany,\newline
IPME RAS, V.O. Bolshoj pr., 61, 199178,\newline 
St. Petersburg\newline
zeitlin@math.columbia.edu,\newline
http://math.columbia.edu/$\sim$zeitlin \newline
http://www.ipme.ru/zam.html  }

\begin{abstract}
The construction of the Q-operator for twisted affine 
superalgebra $U_q(C^{(2)}(2))$ is given. It is shown that the corresponding prefundamental representations give rise to evaluation modules some of which do not have a classical limit, which nevertheless appear to be a necessary part of fusion relations.  
\end{abstract}

\maketitle

{\small {\textbf {2010 Mathematics Subject Classification.} Primary: 17B37, 81R50}}

{\small {\textbf{Keywords.} $Q$-operator, integrable model, fusion relations, twisted affine superalgebra, $R$-matrix}}

\section{Introduction}
The Q-operator and its generalizations are important ingredients in the study of quantum integrable models. Namely, eigenvalues of the transfer-matrices,  corresponding to various representations can be expressed in terms of eigenvalues of the Q-operator, which has less complicated analytic properties.  
These features of the Q-operators were first noticed by  
Baxter the early 70s in the case of vertex models. Later, after the quantum group interpretation of the quantum integrable models it was realized that the original Baxter Q-operator correspond to the integrable model based on the  simplest nontrivial quantum affine algebra $U_q(\what{\sl}(2))$. A natural question was to generalize this notion to the higher rank and give a proper representation-theoretic meaning to these fundamental building blocks for transfer matrix eigenvalues. The first idea in that direction was given in the papers of V. Bazhanov, S. Lukyanov and A. Zamolodchikov \cite{blz1}, \cite{blz2} in the context of the construction of integrable structure of conformal field theory: the interpretation of Q-operators for $U_q(\what{\sl}(2))$ as transfer-matrices 
for certain infinite-dimensional representations of the Borel subalgebra of $U_q(\what{\sl}(2))$.  Later their results were  generalized 
in \cite{bhk}, \cite{kojima} to the case of $U_q(\what{\sl}(n))$. Finally,  
in the recent preprint of E. Frenkel and D. Hernandez  \cite{hf} the full representation-theoretic description of Q-operators was given for large class of integrable models based on any untwisted quantum affine algebra $\cU_q(\mathfrak{g})$ and connected to the 
earlier description of the transfer-matrix eigenvalues via the q-characters \cite{fr}. The infinite-dimensional representations corresponding to the Q-operator, which the authors called "prefundamental representations" were constructed just before that in \cite{hj}.  

At the same time, some analogues of the Q-operators were constructed in this way in the case of superalgebras \cite{kz}, \cite{bt}, \cite{tsuboi}. In this article we improve the constructions of \cite{kz}. 
In that paper an attempt to construct the Q-operator and associated fusion relation for transfer matrices was made in the case of $U_q(C^{(2)}(2))\cong U_q(sl^{(2)}(2|1))$. However, the construction given there lead to only partial result: half of the resulting transfer matrices were built ``by hands" out of Q-operators and did not seem to correspond to any finite dimensional representation of $U_q(C^{(2)}(2))$. In this paper we solve this ambiguity, by allowing some representations to have no classical limit ($q\rightarrow 1$). The approach we are using allows to show explicitly the similarity between $U_q(A_1^{(1)})$ and $U_q(C^{(2)}(2))$ previously noticed on the level of universal $R$-matrices \cite{kt}. 

The structure of the article is as follows. In Section \ref{sec:osp21} we fix the notations 
and describe the relation between finite-dimensional representations of $U_q(\osp(2|1))$ and $U_q(\sl(2))$, previously noticed on the level of modular double \cite{iz}. The approach, which can be generalized to higher rank superalgebras is that we find representations of $U_q(\osp(2|1))$ inside the tensor product of finite-dimensional representation of $U_{-iq}(\sl(2))$ and two-dimensional Clifford algebra.  
Such representation splits into two irreducible representations which differ by the parity of the highest weight and have equal dimensions. It is notable that the even-dimensional irreducible representations obtained in this way 
do not have the classical limit. We also give explicit formulas for $R$-matrix in these representations.
In Section \ref{sec:evaluate} we consider evaluation modules for $U_q(C^{(2)}(2))$, which can be obtained in a similar fashion from evaluation modules of $U_{-iq}(A_1^{(1)})$. We explicitly find the resulting trigonometric $R$-matrix and its matrix coefficients (with the details of calculations in the Appendix). We also introduce in Section \ref{sec:evaluate} the prefundamental representations for $U_q(C^{(2)}(2))$ and study in detail the relations in the Grothendieck ring of prefundamental representations combined with evaluation representations. The relations in the Grothendieck ring lead to relations between transfer-matrices and Q-operators: in Section \ref{sec:transfer} we correct the constructions of \cite{kz}, where the integrable structure of superconformal field theory was studied, now changing ``fusion-like" relations by the true fusion relations.

\section{Quantum superalgebra $U_q(\osp(2|1))$ and its representations}\label{sec:osp21}

We define the quantum superalgebra $U_q(\osp(2|1))$ as follows. It is a Hopf algebra generated by even element $\cK$ and odd elements $\cE$ and $\cF$ such that
\Eqn{
\{\cE,\cF\}&:=\cE\cF+\cF\cE=\frac{\cK-\cK\inv}{q+q\inv},\\
\cK\cE&=q^2\cE\cK,\\
\cK\cF&=q^{-2}\cF\cK,
}
where the corresponding coproduct is:
\Eqn{
\D(\cE)&=\cE\ox \cK+1\ox \cE,\\
\D(\cF)&=\cF\ox 1+\cK\inv\ox \cF,\\
\D(\cK)&=\cK\ox \cK.
}

Let us choose the (odd) Clifford generators $\xi, \eta$ satisfying \Eq{\xi^2=\eta^2=1, \tab \xi\eta= -\eta\xi}
which acts in the space $\C^{1|1}:=span\{|+\>,|-\>\}$, where $|-\>$, $|+\>$ are odd and even vectors correspondingly, by 
\Eq{\pi(\xi)=\veca{0&1\\1&0}, \tab \pi(\eta)=\veca{0&i\\-i&0},} such that 

\Eq{\pi(i\xi\eta) = \veca{1&0\\0&-1}.}

The following notation will play a crucial role in relating the superalgebra and the classical case via the spinor representation:
\begin{Def}
We denote by 
\Eq{q_*:=-iq}
and writing $q:=e^{\pi ib^2}, q_*:=e^{\pi ib_*^2}$, we have
\Eq{b^2=b_*^2+\frac{1}{2}.}
\end{Def}

Then we have the following proposition observed in \cite{iz}, which can be proved by direct computation.
\begin{Prop}\label{supermap} If $E,F,K$ generate $U_{q_*}(\sl(2))$, then
\Eq{\cE=\xi E, \tab \cF=\eta F, \tab \cK=i\xi\eta K}
generate $U_q(\osp(2|1))$.
\end{Prop}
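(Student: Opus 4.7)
The statement is a direct verification: one substitutes $\cE = \xi E$, $\cF = \eta F$, $\cK = i\xi\eta K$ into the three defining relations of $U_q(\osp(2|1))$ and reduces each to the relations of $U_{q_*}(\sl(2))$ combined with the Clifford identities $\xi^2 = \eta^2 = 1$, $\xi\eta = -\eta\xi$. The tacit assumption, implicit in the tensor-product nature of the realization, is that $E,F,K$ commute with the Clifford generators $\xi,\eta$; once this is granted the proof is purely computational. My plan is simply to check the three relations in turn after collecting a handful of sign identities.

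Before substituting I would record three book-keeping identities. Since $q_* = -iq$, one has $q_*^{\pm 2} = -q^{\pm 2}$ and $q_*^{-1} = iq^{-1}$, so
\[
q_* - q_*^{-1} = -i\,(q + q^{-1}).
\]
Next, $(i\xi\eta)^2 = -\xi\eta\xi\eta = \xi^2\eta^2 = 1$, so $\cK^{-1} = i\xi\eta\,K^{-1}$ and hence
\[
\cK - \cK^{-1} = i\xi\eta\,(K - K^{-1}).
\]
Finally, the Clifford relations yield $\xi\eta\xi = -\eta$ and $\xi\eta\cdot\eta = \xi$.

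With these in hand the three verifications collapse to one line each. For the anticommutator, $\cE\cF + \cF\cE = \xi\eta\,EF + \eta\xi\,FE = \xi\eta\,[E,F]$; inserting the $\sl(2)$ relation $[E,F] = (K - K^{-1})/(q_* - q_*^{-1})$ and applying the identities above turns the right-hand side into exactly $(\cK - \cK^{-1})/(q + q^{-1})$. For $\cK\cE = q^2 \cE\cK$, moving $K$ past $E$ contributes $q_*^2 = -q^2$ while reducing $\xi\eta\xi$ to $-\eta$ supplies the compensating minus sign; the analogous computation with $q_*^{-2} = -q^{-2}$ and $\xi\eta\cdot\eta = \xi$ handles $\cK\cF = q^{-2}\cF\cK$. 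The only real obstacle is sign bookkeeping: the factor $-i$ inside $q_*$, the factor $i$ inside $\cK$, and the Clifford anticommutations must conspire, and they do so precisely because $(i\xi\eta)^2 = 1$ and $q_* - q_*^{-1}$ carries the compensating factor of $-i$. No deeper representation-theoretic input is required, which is exactly what makes this ``spinor trick'' a useful template for the evaluation-module construction in the next section.
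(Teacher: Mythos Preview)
Your proof is correct and is precisely the direct computation the paper alludes to; the paper itself only remarks that the proposition ``can be proved by direct computation'' and gives no further details. Your bookkeeping of the three sign sources---the $-i$ in $q_*$, the $i$ in $\cK$, and the Clifford anticommutation---is exactly the content of that computation.
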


Therefore, we are now able to relate the representations of 
$U_{q_*}(\sl(2))$ and $U_q(\osp(2|1))$. Let us do it explicitly.

Consider the $s+1=2l+1$ dimensional representation $V_s$ of $U_{q_*}(\sl(2))$ with basis
$$e_m^{l},\tab m=-l,...,l$$
and action
\Eqn{K\cdot e_m^{l}&=q_*^{2m}e_m^{l},\\
H\cdot e_m^{l}&=(2m)e_m^{l},\\
E\cdot e_m^{l}&=[l-m]_{q_*}e_{m+1}^{l},\\
F\cdot e_m^{l}&=[l+m]_{q_*}e_{m-1}^{l},
}
where formally $K=q_*^H$ and $[n]_q:=\frac{q^n-q^{-n}}{q-q\inv}$ is the quantum number.

The generators $\cE, \cF, \cK$ naturally act on $V_s\ox \C^{1|1}$ by means of the $U_{q_*}(\sl(2))$ action, and decomposes as 
\Eq{W_s=V_s\ox \C^{1|1}=W_s^+ \o+ W_s^-,}
where $W_s^\pm$ has highest weight $w_s^\pm=e_l^l \ox |\pm\>$ and spanned by
\Eq{W_s^\pm=span\{ w_s^\pm, \cF\cdot w_s^\pm, \cF^2\cdot w_s^\pm,..., \cF^{s}\cdot w_s^\pm\}.}
Let $e_{m,\pm}^{l}:=e_m^l\ox |\pm\>$ be the natural basis of  $V_s\ox \C^{1|1}$. Note that $e_{m,-}^{l}$ is an odd vector while $e_{m,+}^{l}$ is even.
Then the action of $\cE, \cF, \cK$ can be written explicitly as follows:

\begin{Prop}
\Eqn{ \cK\cdot e_{m,\pm}^{l}&=\pm q_*^{2m}e_{m,\pm}^{l},\\
&=\pm i^{-2m}q^{2m}e_{m,\pm}^{l},\\
\cE\cdot e_{m,\pm}^{l}&=[l-m]_{q_*}e_{m+1,\mp}^{l},\\
&=i^{l-m-1}\{l-m\}_q e_{m+1,\mp}^{l},\\
\cF\cdot e_{m,\pm}^{l}&=\mp i[l+m]_{q_*}e_{m-1,\mp}^{l}\\
&=\mp i^{l+m}\{l+m\}_{q}e_{m-1,\mp}^{l}\\
}
where $\{n\}_{q}:= \frac{q^{-n}-(-1)^{n}q^{n}}{q+q\inv}=i^{1-n} [n]_{q_*}$.
\end{Prop}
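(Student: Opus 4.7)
The plan is to reduce everything to a direct computation using Proposition~\ref{supermap} together with the explicit matrices $\pi(\xi),\pi(\eta),\pi(i\xi\eta)$ and the standard action of $U_{q_*}(\sl(2))$ on the spin-$l$ module $V_s$. Since the Clifford generators $\xi,\eta$ act only on the $\C^{1|1}$ factor of $W_s=V_s\ox\C^{1|1}$ while $E,F,K$ act only on $V_s$, each of the operators $\cE=\xi E$, $\cF=\eta F$, $\cK=i\xi\eta\, K$ factors on $e_{m,\pm}^l=e_m^l\ox|\pm\>$ into a product of its two single-factor actions.

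The first step is simply to read off from the given matrices that
\Eqn{
\pi(\xi)|\pm\>=|\mp\>,\qquad \pi(\eta)|\pm\>=\mp i|\mp\>,\qquad \pi(i\xi\eta)|\pm\>=\pm|\pm\>.
}
Combining this with $K\cdot e_m^l=q_*^{2m}e_m^l$, $E\cdot e_m^l=[l-m]_{q_*}e_{m+1}^l$, $F\cdot e_m^l=[l+m]_{q_*}e_{m-1}^l$ produces immediately the first line of each formula in the proposition, written in terms of $q_*$-quantum numbers.

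The second, slightly less mechanical step is to translate the $q_*$-formulas into $q$-formulas via $q_*=-iq$. For $\cK$ this is just $q_*^{2m}=(-i)^{2m}q^{2m}=i^{-2m}q^{2m}$. The substantive identity is $[n]_{q_*}=i^{n-1}\{n\}_q$, which I would verify by a short direct calculation:
\Eqn{
q_*^n-q_*^{-n}=(-i)^n q^n-i^n q^{-n}=i^n\bigl((-1)^n q^n-q^{-n}\bigr),\qquad q_*-q_*\inv=-i(q+q\inv),
}
so that
\Eqn{
[n]_{q_*}=\frac{i^n\bigl((-1)^n q^n-q^{-n}\bigr)}{-i(q+q\inv)}=i^{n-1}\cdot\frac{q^{-n}-(-1)^n q^n}{q+q\inv}=i^{n-1}\{n\}_q.
}
Applying this with $n=l-m$ for $\cE$ and with $n=l+m$ for $\cF$ (absorbing the extra $\mp i$ in the latter case into $\mp i\cdot i^{l+m-1}=\mp i^{l+m}$) gives the second line of each formula.

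I do not expect any serious obstacle: the content is entirely bookkeeping, and the only place that requires care is tracking the powers of $i$ in the conversion $[n]_{q_*}\leftrightarrow\{n\}_q$, where a sign error in $q_*^{-n}=i^nq^{-n}$ or in $q_*-q_*\inv=-i(q+q\inv)$ would propagate through the $\cE$ and $\cF$ formulas. A minor secondary point is the parity convention on $W_s$: because $E,F,K$ are treated as even and act on the first tensor factor, no Koszul sign arises in $(\xi E)(e_m^l\ox|\pm\>)=E e_m^l\ox\pi(\xi)|\pm\>$, so the factorization used above is consistent with the graded structure.
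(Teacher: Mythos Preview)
Your proposal is correct and is exactly the direct computation the paper has in mind; the paper does not actually write out a proof of this proposition, treating it as an immediate consequence of Proposition~\ref{supermap}, the Clifford matrices, and the identity $[n]_{q_*}=i^{n-1}\{n\}_q$, all of which you verify explicitly.
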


We notice that the representations of even dimension is something which we do 
not encounter in the classical case, namely all the finite-dimensional irreducible representations of Lie superalgebra $\osp(2|1)$ are odd-dimensional.

\begin{Rem}
It is well known that the finite-dimensional irreducible representations of $\osp(2|1$) Lie superalgebra have odd dimension only (see e.g. \cite{dictionary}). One can relate  $W_s^{\pm}$ for even $s$ to those by considering classical limit. Due to our normalization, to do that one has to proceed through the following steps. First, one has to rescale $e^l_m$ so that $Fe^l_m=e^l_{m-1}$ and renormalize E so that $E'=\frac{q+q^{-1}}{q-q^{-1}}E$. Then $\mathcal{E}'=\xi E'$, and $\mathcal{F}$ are such that the commutation relations on $W_s^{\pm}$ in the limit $q\to 1$ are such that $[\mathcal{E}', \mathcal{F}]=H$, i.e. the commutation relations of $\osp(2|1)$. Such limiting procedure is not possible in the case of even-dimensional $W_s^{\pm}$ as the coefficients will not converge.
\end{Rem}

\begin{Ex} For $l=\frac{1}{2}$, the representation on $(W_1^\pm, \pi_1)$ with basis $\{e_{1/2,\pm}^{1/2},e_{-1/2,\mp}^{1/2}\}$ is given by
\Eqn{
\pi_1(\cK)&=\veca{\mp iq&0\\0&\mp iq\inv},\tab \pi_1(H)=\veca{1&0\\0&-1}, \\
\pi_1(\cE)&=\veca{0&1\\0&0},\tab \pi_1(\cF)=\veca{0&0\\\mp i&0}.
}
For $l=1$, the representation on $(W_2^\pm, \pi_2)$ with basis $\{e_{1,\pm}^{1},e_{0,\mp}^{1},e_{-1,\pm}^{1}\}$ is given by
\Eqn{
\pi_2(\cK)&=\veca{\mp q^2&0&0\\0&\mp1 &0\\0&0&\mp q^{-2}},\tab \pi_2(H)=\veca{2&0&0\\0&0&0\\0&0&-2}, \\
\pi_2(\cE)&=\veca{0&1&0\\0&0&i(q\inv -q)\\0&0&0},\tab \pi_2(\cF)=\veca{0&0&0\\\pm(q\inv -q)&0&0\\0&\pm i&0}.
}
\end{Ex}

Now we will find the formula for the $R$-matrix acting in tensor product of 
$W^{\pm}_s$.

Let \Eq{\exp_q(x)=\sum_{n=0}^\oo \frac{x^n}{\lceil n \rceil_q!}}
where $\lceil n \rceil_q=\frac{1-q^n}{1-q}$. The the following Theorem holds.
\begin{Thm}\label{R_osp}
The universal $R$ matrix is given by
$$\bR=Q \cR$$
where $Q:=C q_*^{\frac{H\ox H}{2}}$ with
$C:=\frac{1}{2}(1\ox 1+i\xi\eta\ox 1+1\ox i\xi\eta +\xi\eta\ox \xi\eta)$
such that 
\Eq{C\cdot |(-1)^{\e_1}\>\ox |(-1)^{\e_2}\> = (-1)^{\e_1\e_2} |(-1)^{\e_1}\>\ox |(-1)^{\e_2}\>,\tab \e_i\in\{0,1\},} and
\Eq{ \cR&:=\exp_{q_*^{-2}}(i(q_*\inv-q_*) \cE\ox \cF)\\
\no &=\exp_{-q^{-2}}(-(q+q\inv) \cE\ox \cF)\\
\no &=\sum a_n \cE^n \ox \cF^n}
where 
\Eq{a_n=(-1)^nq^{\frac{1}{2}n(n-1)}\frac{(q+q\inv)^n}{\{n\}_q!}}
\end{Thm}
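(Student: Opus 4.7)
The strategy is to verify the intertwiner relation $\bR\Delta(x)=\Delta^{op}(x)\bR$ for each generator $x\in\{\cK,\cE,\cF\}$ on the modules $W_s^\pm \ox W_{s'}^{\pm'}$, leveraging Proposition \ref{supermap} to reduce the calculations to the well-known universal $R$-matrix for $U_{q_*}(\sl(2))$, with the Clifford operator $C$ precisely accounting for the super-tensor Koszul signs.

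The case $x=\cK$ is essentially immediate: $\Delta(\cK)=\cK\ox\cK$ is cocommutative, $q_*^{H\ox H/2}$ commutes with $K\ox K$ by weight considerations, $C$ commutes with $(i\xi\eta)\ox(i\xi\eta)$ since it is built from even Clifford monomials, and each summand $\cE^n\ox\cF^n$ of $\cR$ has total $\cK\ox\cK$-weight zero. For $x=\cE$ (the case $x=\cF$ being analogous, or obtained from it via the super antipode) the heart of the argument is the pair of conjugation identities
\Eqn{
Q(\cE\ox 1)Q\inv=\cE\ox\cK,\qquad Q(1\ox\cE)Q\inv=\cK\ox\cE.
}
These follow by factoring $Q=C\,q_*^{H\ox H/2}$: the inner conjugation by $q_*^{H\ox H/2}$ produces the standard $\sl(2)$-type shift $E\ox 1\mapsto E\ox K_*$ (with $K_*=q_*^H$), and the outer conjugation by $C$ acts on the odd Clifford factor of $\cE=\xi E$ by inserting $i\xi\eta$ on the complementary tensor slot, thereby upgrading the bare $K_*$ to $\cK=i\xi\eta K_*$. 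Given these identities, after cancelling $Q$ on both sides, the intertwining $\bR\Delta(\cE)=\Delta^{op}(\cE)\bR$ collapses to an equation purely inside $\cR$:
\Eqn{
\cR(\cE\ox\cK + 1\ox\cE) = (\cE\ox\cK\inv + 1\ox\cE)\cR.
}
Substituting $\cR=\sum_n a_n\,\cE^n\ox\cF^n$, commuting $\cK^{\pm1}$ past $\cF^n$ via $\cK\cF^n=q^{-2n}\cF^n\cK$, and applying a super $\sl(2)$-type identity for $\{\cE,\cF^n\}_\pm$, one matches the coefficients of $\cE^{n+1}\ox\cF^n$ to obtain a two-term recursion for $a_n$ whose unique solution with $a_0=1$ is the stated formula $a_n=(-1)^n q^{n(n-1)/2}(q+q\inv)^n/\{n\}_q!$.

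The main obstacle is the careful bookkeeping of super-signs in the $\cE$-case. It is precisely these signs, originating from the Koszul rule in the super-tensor product together with the odd parity of $\cE,\cF$, that force the braided quantum integer $\{n\}_q=i^{1-n}[n]_{q_*}$ to appear in place of the ordinary $[n]_q$, and that contribute the factor $(-1)^n$ in $a_n$. The role of $C$ is precisely to absorb these signs so that the remainder of the derivation reduces to a disguised form of the classical $U_{q_*}(\sl(2))$ calculation; without $C$, the intertwining identity would fail by a Koszul sign at each term.
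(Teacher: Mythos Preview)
Your proposal is correct and follows essentially the same route as the paper's proof: both establish the conjugation identities $(\cE\ox 1)Q=Q(\cE\ox\cK\inv)$ and $(\cK\ox\cE)Q=Q(1\ox\cE)$ (equivalent to your $Q(\cE\ox 1)Q\inv=\cE\ox\cK$, etc.), reduce the intertwiner condition to $(1\ox\cE+\cE\ox\cK\inv)\cR=\cR(1\ox\cE+\cE\ox\cK)$, expand $\cR=\sum a_n\cE^n\ox\cF^n$, and use the super-commutator identity $\cE\cF^n-(-1)^n\cF^n\cE=\frac{q^n\{n\}_q}{1+q^2}\cK\cF^{n-1}+\frac{(-1)^nq^{-n}\{n\}_q}{1+q^{-2}}\cK\inv\cF^{n-1}$ to derive the recursion $a_n/a_{n-1}=-q^n(1+q^{-2})/\{n\}_q$. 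Your added conceptual framing---that $C$ absorbs the Koszul signs so the residual computation is a disguised $U_{q_*}(\sl(2))$ calculation---is a nice gloss, but the underlying argument is the paper's.
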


The proof is given in Appendix.\\

Finally, let us give for completeness the explicit matrix coefficients of $R$.  
Namely, we find the pairing for $\bR_{l_1,l_2}=\bR|_{W^{\pm}_{s_1}\ox W^{\pm}_{s_2}}$
$$\left\<e_{m_1',\e_1}^{l_1}\ox e_{m_2',\e_2}^{l_2}, \bR_{l_1,l_2}(e_{m_1,\e_1'}^{l_1}\ox e_{m_2,\e_2'}^{l_2})\right\>$$
where $\epsilon_i\in\{0,1\}$ indicates the parity, namely $|\pm\>=|(-1)^{\epsilon}\>$.
Let us fix $l_1, l_2$ and write $e_{m,\e}^{l}$ for $e_{m,\pm}^{l}$.
\begin{Prop}
$$\left\<e_{m_1',\e_1}^{l_1}\ox e_{m_2',\e_2}^{l_2}, \bR_{l_1,l_2}(e_{m_1,\e_1'}^{l_1}\ox e_{m_2,\e_2'}^{l_2})\right\>=0$$
if $m_1'-m_1\neq m_2-m_2'$ or $m_1'-m_1=m_2-m_2'<0$. 

Otherwise let $n=m_1'-m_1$, we have
\Eqn{&\left\<e_{m_1',\e_1'}^{l_1}\ox e_{m_2',\e_2'}^{l_2}, \bR_{l_1,l_2}(e_{m_1,\e_1}^{l_1}\ox e_{m_2,\e_2}^{l_2})\right\>\\
=& i^{(l_1-m_1+l_2+m_2-1)n-2m_1'm_2'}(-1)^{\e_1\e_2+n}q^{\frac{1}{2}n(n-1)+2m_1'm_2'}\cdot\\
&\cdot\frac{(q+q\inv)^n}{\{n\}_q!}\frac{\{l_1-m_1\}_q!}{\{l_1-m_1-n\}_q!}\frac{\{l_2+m_2\}_q!}{\{l_2+m_2-n\}_q!}
}
In terms of $q_*$ and using the standard $[n]_{q_*}$ instead, we get
\Eqn{=&q_*^{\frac{1}{2}n(n-1)+2m_1'm_2'}(-1)^{\e_1\e_2}\frac{(q_*-q_*\inv)^n}{[ n ]_{q_*}!}\frac{[ l_1-m_1]_{q_*}!}{[ l_1-m_1-n]_{q_*}!}\frac{[ l_2+m_2]_{q_*}!}{[ l_2+m_2-n]_{q_*}!}}Note that there are no more $i$'s using the $q_*$ notation.
\end{Prop}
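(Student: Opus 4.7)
The plan is to insert $\bR=Cq_*^{H\otimes H/2}\cR$ from Theorem \ref{R_osp} and evaluate on the basis vectors directly. Since $\cE^n$ shifts the first weight by $+n$, $\cF^n$ shifts the second weight by $-n$, and $Q=Cq_*^{H\otimes H/2}$ is diagonal in the weight basis, only the single term with $n=m_1'-m_1=m_2-m_2'\geq 0$ from the series $\cR=\sum_n a_n\,\cE^n\otimes\cF^n$ can contribute to the pairing, establishing both vanishing clauses.

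For this $n$, iterating the formulas for $\cE$ and $\cF$ from the previous Proposition yields
\Eqn{\cE^n\cdot e^{l_1}_{m_1,\e_1}&=i^{n(l_1-m_1-1)-n(n-1)/2}\,\frac{\{l_1-m_1\}_q!}{\{l_1-m_1-n\}_q!}\,e^{l_1}_{m_1+n,\,\e_1+n},\\
\cF^n\cdot e^{l_2}_{m_2,\e_2}&=(-1)^{n\e_2+n(n+1)/2}\,i^{n(l_2+m_2)-n(n-1)/2}\,\frac{\{l_2+m_2\}_q!}{\{l_2+m_2-n\}_q!}\,e^{l_2}_{m_2-n,\,\e_2+n},}
with parities taken mod $2$; the alternating $\mp$ in $\cF$ telescopes to $\prod_{k=0}^{n-1}(-1)^{\e_2+k+1}=(-1)^{n\e_2+n(n+1)/2}$. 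Assembling $\cE^n\otimes\cF^n$ uses the super tensor rule $(\cE^n\otimes\cF^n)(v\otimes w)=(-1)^{n|v|}\cE^nv\otimes\cF^nw$, which contributes $(-1)^{n\e_1}$. Multiplying by $a_n=(-1)^nq^{n(n-1)/2}(q+q^{-1})^n/\{n\}_q!$ and acting with $Q$, whose eigenvalue on the target vector is $q_*^{2m_1'm_2'}(-1)^{(\e_1+n)(\e_2+n)}$, one reads off the coefficient of $e^{l_1}_{m_1',\e_1'}\otimes e^{l_2}_{m_2',\e_2'}$; the parity matching $\e_i'\equiv\e_i+n\pmod 2$ is automatic inside the fixed summand $W^\pm_{s_1}\otimes W^\pm_{s_2}$.

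What remains is sign accounting. Merging the $i$-contributions from $\cE^n$ and $\cF^n$ uses $i^{-n(n-1)}=(-1)^{n(n-1)/2}$, and combining this with the signs from $a_n$, the super rule, $\cF^n$, and $C$, the total sign exponent reduces modulo $2$ as
\Eqn{n+n\e_1+n\e_2+\tfrac{n(n+1)}{2}+\tfrac{n(n-1)}{2}+(\e_1+n)(\e_2+n)\equiv \e_1\e_2+n\pmod 2,}
using $n(n+1)/2+n(n-1)/2=n^2$ together with $n^2\equiv n\pmod 2$. The remaining $i$-exponents collect to $n(l_1-m_1+l_2+m_2-1)$, and splitting $q_*^{2m_1'm_2'}=i^{-2m_1'm_2'}q^{2m_1'm_2'}$ produces the first displayed formula. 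The $q_*$-form then follows by substituting $\{n\}_q=i^{1-n}[n]_{q_*}$ in every quantum factorial and $q=-iq_*$ in the overall $q$-factor; the accumulated powers of $i$ cancel exactly.

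The principal obstacle is the sign bookkeeping: five independent sources of $\pm 1$ (the $(-1)^n$ from $a_n$, the super tensor rule, the $\mp$ in the $\cF$-action, the Clifford $C$, and the even powers of $i$ from merging the Cartan-like piece) must be shown to combine into the compact $(-1)^{\e_1\e_2+n}$, and the mutually compensating $i$-factors in the translation to the $q_*$-notation must be verified term by term.
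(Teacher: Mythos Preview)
Your proof is correct and is exactly the direct computation the paper has in mind: the proposition is stated there without proof as an immediate consequence of Theorem~\ref{R_osp}, and your argument---picking out the unique surviving term $a_n\,\cE^n\otimes\cF^n$, iterating the explicit action of $\cE$ and $\cF$ on the basis $e^l_{m,\pm}$, and then tracking the Koszul sign, the Clifford factor $C$, and the Cartan factor $q_*^{H\otimes H/2}$---is precisely that computation carried out in full. The sign bookkeeping you flag as the main obstacle is handled correctly; in particular your identity $n+n\e_1+n\e_2+\tfrac{n(n+1)}{2}+\tfrac{n(n-1)}{2}+(\e_1+n)(\e_2+n)\equiv \e_1\e_2+n\pmod 2$ is right, and the passage to the $q_*$-form via $\{n\}_q=i^{1-n}[n]_{q_*}$ and $q_*^{2m_1'm_2'}=i^{-2m_1'm_2'}q^{2m_1'm_2'}$ is exactly what eliminates the residual powers of $i$.
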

\begin{Ex}
For $W_1^+\ox W_1^+$, let the basis be $\{e_{1/2,+}^{1/2},e_{-1/2,-}^{1/2}\}\ox\{e_{1/2,+}^{1/2},e_{-1/2,-}^{1/2}\}$. Then $R$ is given by
$$\bR_{\frac{1}{2},\frac{1}{2}}=\veca{q_*^{\frac{1}{2}}&0&0&0\\0&q_*^{-\frac{1}{2}}&(1-q_*^{-2})q_*^{\frac{1}{2}}&0\\0&0&q_*^{-\frac{1}{2}}&0\\0&0&0&-q_*^{\frac{1}{2}}}$$
\end{Ex}
\begin{Ex}
For $W_2^+\ox W_2^+$, let the basis be $\{e_{1,+}^{1},e_{0,-}^{1},e_{-1,+}^{1}\}\ox \{e_{1,+}^{1},e_{0,-}^{1},e_{-1,+}^{1}\}$. Then $R$ is given by
$$\bR_{1,1}=\veca{
q_*^2&0&0&0&0&0&0&0&0\\
0&1&0&q_*^{2}-q_*^{-2}&0&0&0&0&0\\
0&0&q_*^{-2}&0&q_*^{-2}(q_*\inv-q_*)&0&(q_*^2-q_*^{-2})(1-q_*^{-2})&0&0\\
0&0&0&1&0&0&0&0&0\\
0&0&0&0&-1&0&(q_*^2-q_*^{-2})(q_*+q_*\inv)&0&0\\
0&0&0&0&0&1&0&q_*^{2}-q_*^{-2}&0\\
0&0&0&0&0&0&q_*^{-2}&0&0\\
0&0&0&0&0&0&0&1&0\\
0&0&0&0&0&0&0&0&q_*^2\\}
$$
\end{Ex}

Finally we give some remarks about the Casimir operator. In $U_{q_*}(\sl(2))$, it is known that the center is generated by the Casimir operator given by (up to some additive constant):

\Eq{\bC_{\sl(2)} = FE+\frac{q_*K+q_*\inv K\inv}{(q_*-q_*\inv)^2}}
By Proposition \ref{supermap}, it is obvious that $\bC_{\sl(2)}$ commutes with our generators. However, it is not an element of $U_q(\osp(2|1))$. Instead, the element
\Eq{\sqrt{\bC_{\osp(2|1)}} := \eta\xi \bC_{\sl(2)} = \cF\cE-\frac{q\cK-q\inv\cK\inv}{(q+q\inv)^2}} 
will be an element in $U_q(\osp(2|1))$ super-commuting with the generators $\{\cE, \cF, \cK\}$. 
By construction, it's square $\bC_{\osp(2|1)} := -\bC_{\sl(2)}^2$ is in the center of $U_q(\osp(2|1))$, given by
\Eq{\bC_{\osp(2|1)} =-\cF^2\cE^2 + \frac{q-q\inv}{(q+q\inv)^2}(q^2\cK+q^{-2}\cK\inv)\cF\cE + \frac{q^2\cK^2+q^{-2}\cK^{-2}}{(q+q\inv)^4}}
up to an additive constant. Under a rescaling of the generators, this is precisely the Casimir element of $U_q(\osp(2|1))$ found in \cite{kr}.

Now it is also clear that the representations $W_s^\pm$ correspond to the positive and negative spectrum of the square root of the Casimir element $\sqrt{\bC_{\osp(2|1)}} = \eta\xi \bC_{\sl(2)}$.

\section{Evaluation modules for $U_q(C^{(2)}(2))$ and prefundamental representations}\label{sec:evaluate}
The quantum affine superalgebra $U_q(C^{(2)}(2))$ is generated by $\cE_i,\cF_i,\cK_i$, $i=0,1$, where $\cE_i$ and $\cF_i$ are odd, with Cartan matrix given by $$A=\veca{2&-2\\-2&2}.$$

In particular, we have 
\Eq{\cK_i\cE_j=q^{a_{ij}}\cE_j\cK_i,\tab\cK_i\cF_j=q^{-a_{ij}}\cF_j\cK_i,}
and in addition the Serre relations
\Eq{\cE_i^3\cE_j+\{3\}_q \cE_i^2\cE_j\cE_i-\{3\}_q\cE_i\cE_j\cE_i^2-\cE_j\cE_i^3&=0\\
\cF_i^3\cF_j+\{3\}_q \cF_i^2\cF_j\cF_i-\{3\}_q\cF_i\cF_j\cF_i^2-\cF_j\cF_i^3&=0}
where $\{3\}_q=\frac{q^3+q^{-3}}{q+q\inv}$. Furthermore, for later convenience we modify the scaling of $\cF_i$ and use instead the following commutation relations:
\Eq{\{\cE_i,\cF_i\}=\frac{\cK_i-\cK_i\inv}{q+q\inv}.\label{rescale}}

\subsection{Evaluation modules for $U_q(C^{(2)}(2))$ and trigonometric $R$-matrix}

One check easily that we have the following spinor representation as in the $U_q(\osp(2|1))$ case:
\Eq{\cE_j=E_j\xi,\tab \cF_j=F_j\eta, \tab \cK_j=i\xi\eta K_j,}
and we also have the evaluation modules induced from $(A_1^{(1)})_{q_*}$ given by
\Eqn{E_1\mapsto \l E,&\tab E_0\mapsto \l F\\
F_1\mapsto \l\inv F,&\tab F_0\mapsto \l\inv E\\
K_1\mapsto K,&\tab K_0\mapsto K\inv
}

Then using the 2-dimensional representation of the Clifford algebra, we can consider its action as before on $V_s\ox \C^{1|1}$, and decompose it into $W_s(\l):=W_s(\l)^+\ox W_s(\l)^-$.

\begin{Prop}The action on the evaluation module $W_s(\l)^\pm$ with basis $e_{m,\pm}^l$, ${s=2l}$, $m=-l,...,l$, is given by
\Eqn{\cE_1\cdot e_{m,\pm}^l&=\l[l-m]_{q_*}e_{m+1,\mp}^l\\
\cE_0\cdot e_{m,\pm}^l&=\l [l+m]_{q_*}e_{m-1,\mp}^l\\
\cF_1\cdot e_{m,\pm}^l&=\mp i\l\inv [l+m]_{q_*}e_{m-1,\mp}^l\\
\cF_0\cdot e_{m,\pm}^l&=\mp i\l\inv[l-m]_{q_*}e_{m+1,\mp}^l\\
\cK_1\cdot e_{m,\pm}^l&=\pm q_*^{2m}e_{m,\pm}^l\\
\cK_0\cdot e_{m,\pm}^l&=\pm q_*^{-2m}e_{m,\pm}^l\\
\cK_\d\cdot  e_{m,\pm}^l&= e_{m,\pm}^l\\
}
\end{Prop}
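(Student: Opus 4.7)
The plan is direct verification via the spinor-plus-evaluation homomorphism indicated just above the statement. Concretely, the assignment
\[
\cE_j=E_j\xi,\tab \cF_j=F_j\eta,\tab \cK_j=i\xi\eta\,K_j,
\]
composed with the standard evaluation map $U_{q_*}(A_1^{(1)}) \to U_{q_*}(\sl(2))$ that sends $E_1,F_1,K_1 \mapsto \lambda E,\lambda\inv F,K$ and $E_0,F_0,K_0 \mapsto \lambda F,\lambda\inv E,K\inv$, yields an assignment on generators of $U_q(C^{(2)}(2))$ acting on $V_s \ox \C^{1|1}$. My first step would be to check that this actually defines an algebra homomorphism, not merely a map on generators: the argument of Proposition~\ref{supermap} applies node-by-node because $\xi^2=\eta^2=1$ and $\xi\eta=-\eta\xi$ convert the anticommutator $\{\cE_i,\cF_i\}$ into the bosonic commutator of $E_i,F_i$, and likewise linearize the cubic super-Serre relations of $U_q(C^{(2)}(2))$ into the ordinary Serre relations of $U_{q_*}(A_1^{(1)})$ satisfied by the evaluation representation.

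With well-definedness in place, each of the seven formulas in the statement reduces to a one-line computation on a basis vector $e_{m,\pm}^l = e_m^l \ox |\pm\>$, using the known $U_{q_*}(\sl(2))$-action on $V_s$ recalled in Section~\ref{sec:osp21} together with
\[
\xi|\pm\>=|\mp\>,\tab \eta|\pm\>=\mp i|\mp\>,\tab (i\xi\eta)|\pm\>=\pm|\pm\>.
\]
For example,
\Eqn{
\cE_0 \cdot e_{m,\pm}^l &= \lambda(F e_m^l) \ox \xi|\pm\> = \lambda [l+m]_{q_*}\, e_{m-1,\mp}^l,\\
\cF_0 \cdot e_{m,\pm}^l &= \lambda\inv(E e_m^l) \ox \eta|\pm\> = \mp i\,\lambda\inv [l-m]_{q_*}\, e_{m+1,\mp}^l,\\
\cK_0 \cdot e_{m,\pm}^l &= (K\inv e_m^l) \ox (i\xi\eta)|\pm\> = \pm q_*^{-2m}\, e_{m,\pm}^l,
}
and the three generators attached to node~$1$ are handled in exactly the same way, just reproducing the finite-type calculation of Section~\ref{sec:osp21} decorated with the scalar $\lambda$. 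Finally $\cK_\delta=\cK_0\cK_1$ acts as the identity because the eigenvalues $\pm q_*^{2m}$ and $\pm q_*^{-2m}$ multiply to $1$.

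It remains to confirm that the splitting $V_s \ox \C^{1|1} = W_s(\lambda)^+ \oplus W_s(\lambda)^-$ defined in Section~\ref{sec:osp21} is preserved by the affine generators. This is automatic: by construction each of $\cE_0,\cF_0,\cK_0$ flips $m$ and the parity label in exactly the same correlated way as the node-$1$ generators, and the spectral parameter $\lambda$ enters only as an overall scalar that does not couple to the Clifford factor. The only real source of friction is bookkeeping—tracking the parity signs through the Clifford flips consistently with the conventions fixed in Section~\ref{sec:osp21}—and this is the same exercise already performed in the finite-type case, now duplicated for the affine index~$0$.
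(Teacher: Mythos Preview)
Your proposal is correct and follows exactly the approach the paper sets up: the paper states this Proposition without proof, immediately after introducing the spinor assignment $\cE_j=E_j\xi,\ \cF_j=F_j\eta,\ \cK_j=i\xi\eta K_j$ composed with the evaluation map for $U_{q_*}(A_1^{(1)})$, so the intended argument is precisely the direct basis computation you carry out. Your verification of the Clifford actions, the node-by-node reduction to Proposition~\ref{supermap}, and the observation that the affine generators respect the $W_s(\l)^+\oplus W_s(\l)^-$ splitting all match the paper's implicit reasoning.
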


In the case $s=1$, one can solve for the $R$ matrix explicitly.
\begin{Prop}
The $R$ matrix for $s=1$, $W_s(\l_1)^{\e_1}\ox W_s(\l_2)^{\e_2}$, $\e_i\in \{+,-\}$, is, up to scalar, given by

\Eq{\label{RC2} \bR\simeq\veca{1-z^2 q_*^2&0&0&0\\0&\e_1q_*(1-z^2)&\e_2z(1-q_*^2)&0\\0&\e_1z(1-q_*^2)&\e_2q_*(1-z^2)&0\\0&0&0&-\e_1\e_2(1-z^2q_*^2)},}
where $z=\frac{\l_2}{\l_1}$.
Alternatively, let $\l_i=e^{x_i}$, then we can cast it in trigonometric terms:
\Eq{\bR\simeq\veca{\sinh(x_1-x_2-\ln q_*)&0&0&0\\0&\e_1\sinh(x_1-x_2)&\e_2\sinh(\ln q_*)&0\\0&\e_1\sinh(\ln q_*)&\e_2\sinh(x_1-x_2)&0\\0&0&0&-\e_1\e_2\sinh(x_1-x_2-\ln q_*)}.}
\end{Prop}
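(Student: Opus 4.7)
The plan is to determine $\bR$ by enforcing the intertwining identity $\bR\,\Delta(x)=\Delta^{\mathrm{op}}(x)\,\bR$ for every generator $x$ of $U_q(C^{(2)}(2))$, using the explicit evaluation action from the preceding proposition, and then reading off the trigonometric form via $\l_i=e^{x_i}$.

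First, I fix the matrix pattern by a weight argument: ordering the basis of $W_1(\l_1)^{\e_1}\ox W_1(\l_2)^{\e_2}$ by $\cK_1\ox\cK_1$-eigenvalues, the top and bottom weight spaces are one-dimensional while the middle weight space is two-dimensional, so commutation with $\Delta(\cK_i)$ forces $\bR$ to have the block form $\mathrm{diag}(a,M,d)$ with a single unknown $2\times 2$ middle block $M$, i.e.\ five unknown scalars in total. Next, the equations $\bR\,\Delta(\cE_1)=\Delta^{\mathrm{op}}(\cE_1)\,\bR$ and its $\cF_1$ analogue produce linear relations that pin down these entries up to an overall scale and up to the dependence on a single spectral variable; the parity labels $\e_1,\e_2$ enter here through the $\mp i$ factors in the action of $\cF_1$ on $W_1(\l)^{\pm}$. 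Finally, feeding in $\cE_0$ (whose action carries the factor $\l$) forces and fixes the dependence on $z=\l_2/\l_1$, yielding \eqref{RC2}; the trigonometric rewriting follows by substituting $\l_i=e^{x_i}$ and extracting the common factor $e^{x_1-x_2}$ row by row.

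The main obstacle is the super-sign bookkeeping: the graded opposite coproduct generates Koszul signs whenever two odd generators are transposed in a tensor product, and the parity labels $\e_i$ appear both directly through the action of $\cF_j$ on $W_1(\l)^{\pm}$ and indirectly through the parity of the tensor basis vectors. A cleaner alternative is to exploit the spinor construction of the previous section: since $W_1(\l)^{\pm}$ is a direct summand of $V_1(\l)\ox \C^{1|1}$ for $U_{q_*}(A_1^{(1)})$, one can start from the standard trigonometric six-vertex $R$-matrix on $V_1(\l_1)\ox V_1(\l_2)$ and dress it by the Clifford factor analogous to $Q=C\,q_*^{H\ox H/2}$ from Theorem \ref{R_osp}, then restrict to the appropriate parity sectors. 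The signs $\e_1,\e_2$ and $-\e_1\e_2$ in \eqref{RC2} then come out directly from the identity $C\cdot|(-1)^{\e_1}\>\ox|(-1)^{\e_2}\>=(-1)^{\e_1\e_2}|(-1)^{\e_1}\>\ox|(-1)^{\e_2}\>$ already used in Theorem \ref{R_osp}.
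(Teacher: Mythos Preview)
Your primary approach --- reduce to a $2\times 2$ middle block by the $\cK_i$-weight decomposition, then impose the intertwining relations for the odd generators to fix the remaining entries up to scale --- is correct and is exactly what the paper indicates with its one-line ``one can solve for the $R$ matrix explicitly''; no further detail is given in the main text. One small correction to your narrative: since $\cE_1$ already carries the factor $\l$ in the evaluation module, the dependence on $z=\l_2/\l_1$ is present from the $\cE_1$ relation alone, not only once $\cE_0$ is brought in; the role of $\cE_0$ is rather to supply an independent equation (from the second simple root) which, together with the $\cE_1$ relation, overdetermines the system and forces the specific ratios in \eqref{RC2}.

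The paper's only detailed computation is in the Appendix and goes a different route: it evaluates the full Khoroshkin--Tolstoy factored universal $R$-matrix $\bR=Q\,\cR_{>0}\cR_0\cR_{<0}$ on $W_s(\l_1)^\pm\ox W_s(\l_2)^\pm$ for arbitrary $s$ (Theorem \ref{RC2full}), then specializes to $l_1=l_2=\tfrac12$ to recover \eqref{RC2} as a consistency check. That route gives the general-$s$ answer at once but requires working out the action of all non-simple and imaginary root vectors; your direct intertwiner calculation is far lighter for $s=1$ but does not generalize. Your alternative via the six-vertex $R$-matrix of $U_{q_*}(A_1^{(1)})$ dressed by the Clifford factor $C$ is also sound and is closest in spirit to how the Appendix handles the Cartan part $Q$, though the paper does not carry it through as an independent derivation of \eqref{RC2}.
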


In the general case, one has to calculate the action of the generators corresponding to the imaginary roots. The explicit calculation is given in the Appendix and the explicit form of the $R$-matrix is presented in Theorem \ref{RC2full}.

\subsection{Prefundamental representations and the Grothendieck ring}

Let us consider the Verma modules corresponding to evaluation modules of   $U_q(C^{(2)}(2))$. Namely, let us start from  the following representation of $U_q(\osp(2|1))$:
\Eq{\cW_s^\pm &= \{ \cF^k\cdot w_s^\pm\}_{k=0}^\oo,}
where $w_s^\pm:=e_l^l\ox |\pm\>$ as before such that $\cK\cdot w_s^\pm = \pm q_*^s w_s^\pm$.

Writing $|k\>_\pm:=\cF^k w_s^\pm$, the basis are related to $e_{m,\pm}^l$ of the $s+1$ dimensional module $W_s^\pm$ from before by
\Eq{\no |0\>_\pm&=w_s^\pm=e_{l,\pm}^l,\\
|k\>_\pm&=\cF^k \cdot w_s^\pm = \cF^k\cdot e_{l,\pm}^l=i^{-k}\frac{[2l]_{q_*}!}{[2l-k]_{q_*}!}e_{l-k,\pm(-1)^k}^l.}
Note that $|k\>_\pm$ is an even vector when $\pm(-1)^k = +1$.

This gives rise to the following evaluation module of the upper Borel part $\fb_+$ of  $U_q(C^{(2)}(2))$ on $\cW_s^\pm$:
\Eqn{\cE_0|k\>_\pm&=\l \cF|k\>_\pm = \l |k+1\>_\pm,\\
\cE_1|k\>_\pm&=\l \cE |k\>_\pm = \l[k]_{q_*}[s-k+1]_{q_*} |k-1\>_\pm,\\
\cK_0|k\>_\pm&=\cK\inv |k\>_\pm=\pm q^{2k}q_*^{-s}|k\>_\pm=\pm(-1)^k q_*^{2k-s}|k\>_\pm,\\
\cK_1|k\>_\pm&=\cK |k\>_\pm=\pm q^{-2k}q_*^{s}|k\>_\pm=\pm(-1)^k q_*^{-2k+s}|k\>_\pm.
}

Furthermore, we see that when $s\in \Z_{\geq 0}$, $\cW_s^\pm(\l)$ has a block diagonal form such that in the Grothendieck ring of the representation of $\fb_+$,
\Eq{[\cW_s^\pm(\l)]=[W_s^\pm(\l)]+[\cW_{-s-2}^{\pm(-1)^{s+1}}(\l)].}

Let us define the prefundamental (or $q$-oscillator) representation of $\fb_+$ of  $U_q(C^{(2)}_q(2))$. 
The $q$-oscillator algebra is generated by $\a_+,\a_-,\cH$ such that
\Eq{q \a_+\a_- + q\inv \a_-\a_+ = -\frac{1}{q+q\inv},\tab [\cH,\a_\pm]=\pm 2\a_\pm,}
where $\a_\pm$ are considered as odd elements.
We consider the Fock modules 
\Eq{\Pi_{\pm}=span\{\a_\pm^k|0\>_\pm :\cH|0\>_\pm = 0, \a_\mp|0\>_\pm =0\}_{k=0}^\oo,}
where the vacuum vectors $|0\>_\pm$ are even. Then we have an important Lemma.

\begin{Lem} The following substitution provides an infinite dimensional representation of $\fb_+$:
\Eq{\rho_\pm(\l): \cE_1=\l\a_\pm,\tab \cE_0=\l \a_\mp,\tab \cK_1=q^{\pm \cH},\tab \cK_0=q^{\mp\cH}.}
\end{Lem}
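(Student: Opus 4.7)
The Borel subalgebra $\fb_+$ is presented by the generators $\cE_0,\cE_1,\cK_0^{\pm1},\cK_1^{\pm1}$ subject to the Cartan--Kac relations $\cK_i\cE_j=q^{a_{ij}}\cE_j\cK_i$ (with $a_{ii}=2$, $a_{ij}=-2$ for $i\neq j$) and the two Serre relations in $\cE_0,\cE_1$. Since neither $\cF_i$ nor the anticommutator $\{\cE_i,\cF_i\}$ appears in $\fb_+$, it suffices to verify these two families of relations on the oscillator side. I will carry out the argument for $\rho_+(\l)$; the $\rho_-$ case is identical after swapping $\a_+\leftrightarrow\a_-$ and $\cH\leftrightarrow-\cH$.

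The Cartan part is immediate. From $[\cH,\a_\pm]=\pm 2\a_\pm$ one gets $q^{\cH}\a_\pm=q^{\pm 2}\a_\pm q^{\cH}$, so
\[
\cK_1\cE_1=q^{\cH}(\l\a_+)=q^{2}\l\a_+\,q^{\cH}=q^{a_{11}}\cE_1\cK_1,
\]
and the three remaining checks $\cK_1\cE_0$, $\cK_0\cE_0$, $\cK_0\cE_1$ are strictly analogous, producing the desired exponents $-2,2,-2$.

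The real content is the Serre relation. Setting $X=\a_+$, $Y=\a_-$, the $q$-oscillator relation gives the normal-ordering rule $YX=-q^{2}XY-\tfrac{q}{q+q^{-1}}$. The plan is to iterate this rule to express $YX^{3}$, $XYX^{2}$, $X^{2}YX$ in terms of $X^{3}Y$ and $X^{2}$:
\begin{align*}
X^{2}YX &= -q^{2}X^{3}Y-\tfrac{q}{q+q^{-1}}X^{2},\\
XYX^{2} &= q^{4}X^{3}Y+\tfrac{q^{3}-q}{q+q^{-1}}X^{2},\\
YX^{3} &= -q^{6}X^{3}Y-\tfrac{q^{5}-q^{3}+q}{q+q^{-1}}X^{2}.
\end{align*}
Substituting into $X^{3}Y+\{3\}_qX^{2}YX-\{3\}_q XYX^{2}-YX^{3}$ and using $\{3\}_q=q^{2}-1+q^{-2}$, the coefficient of $X^{3}Y$ collapses to $1+q^{6}-\{3\}_q(q^{2}+q^{4})=0$, and the coefficient of $X^{2}$ collapses to $\tfrac{1}{q+q^{-1}}\bigl(-\{3\}_qq^{3}+q^{5}-q^{3}+q\bigr)=0$. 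This gives the Serre identity for $(i,j)=(1,0)$; exchanging $X\leftrightarrow Y$ (equivalently applying the same computation to $\rho_-$) yields the case $(i,j)=(0,1)$.

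The only conceptual step that could trip one up is that $\a_\pm$ are \emph{odd} in the $\Z/2$-grading but \emph{not} nilpotent---the oscillator representation on $\Pi_\pm$ has $\a_\pm^{k}\neq 0$ for all $k$---so one cannot simplify $\a_\pm^{2}=0$ as for root vectors of $U_q(\osp(2|1))$. This is precisely why the Serre cancellation rests on the nontrivial identity $1+q^{6}=(q^{2}+q^{4})\{3\}_q$ rather than being automatic, and it is the only place where the specific coefficient $-\tfrac{1}{q+q^{-1}}$ in the oscillator defining relation is used. Once both computations are in place, $\rho_\pm(\l)$ is well-defined on generators and relations, hence extends to a representation of $\fb_+$ on $\Pi_\pm$.
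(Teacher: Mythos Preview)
Your verification is correct. The paper states this Lemma without proof and immediately proceeds to use it, so there is nothing to compare against; your direct check of the Cartan relations and the Serre relations is the natural argument and it goes through as written. One small remark on the sentence ``exchanging $X\leftrightarrow Y$ (equivalently applying the same computation to $\rho_-$)'': swapping $X=\a_+$ and $Y=\a_-$ inside $\rho_+$ changes the normal-ordering rule from $YX=-q^{2}XY-\tfrac{q}{q+q^{-1}}$ to $Y'X'=-q^{-2}X'Y'-\tfrac{q^{-1}}{q+q^{-1}}$, i.e.\ it is the original computation with $q\mapsto q^{-1}$. Since $\{3\}_q=q^{2}-1+q^{-2}$ is invariant under $q\mapsto q^{-1}$, both the $X^{3}Y$ and $X^{2}$ coefficients vanish just as before; it may be worth saying this explicitly rather than leaving it implicit.
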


Let us consider the tensor product $\rho_+(\l\mu)\ox \rho_-(\l\mu\inv)$. The action of $\cE_i$ is given by
\Eqn{\cE_1&=\l(\mu \a_+\ox q^{-\cH}+1\ox \mu\inv \a_-)=:\l(a_-+b_-)\\
\cE_0&=\l(\mu \a_-\ox q^{\cH}+1\ox \mu\inv \a_+)=:\l(a_++b_+)}
so that we have the commutation relations
\Eqn{qa_-a_++q\inv a_+a_-&=-\frac{\mu^2}{q+q\inv},\\
qb_+b_-+q\inv b_-b_+&=-\frac{\mu^{-2}}{q+q\inv},\\
a_{\d_1}b_{\d_2}&=-q^{2\d_1\d_2}b_{\d_2}a_{\d_1},\tab \d_i\in\{\pm\},
}
or, in $q_*$ notation we have:
\Eqn{q_*a_-a_+-q_*\inv a_+a_-&=\frac{\mu^2}{q_*-q_*\inv},\\
q_*b_+b_--q_*\inv b_-b_+&=\frac{\mu^{-2}}{q_*-q_*\inv},\\
a_{\d_1}b_{\d_2}&=q_*^{2\d_1\d_2}b_{\d_2}a_{\d_1},
}
which is similar to the bosonic case considered in \cite{blz2}. Hence as in \cite{blz2}, the tensor product $\rho_+(\l\mu)\ox \rho_-(\l\mu\inv)$ decomposes as
\Eq{\rho_+(\l\mu)\ox \rho_-(\l\mu\inv)=\bo+_{m=0}^\oo \rho^{(m)},}
where
\Eq{\rho^{(m)}: |\rho_k^{(m)}\>=(a_++b_+)^k(a_+-\c b_+)^m |0\>_+\ox |0\>_-,}
for $k\in \Z_{\geq 0}$ and $\c\neq -q_*^{2n}, n\in \Z$ any constant. Note that $|\rho_k^{(m)}\>$ is even when $k+m$ is even.

Let $\mu=q_*^{\frac{s}{2}+\frac{1}{2}}$. Then the action of $\fb_+$ is given by
\Eqn{
\rho_+(\l\mu)\ox \rho_-(\l\mu\inv)(\cK_1)|\rho_k^{(m)}\>&=q^{-2(k+m)}|\rho_k^{(m)}\>=(-1)^{k+m} q_*^{-2(k+m)}|\rho_k^{(m)}\>,\\
\rho_+(\l\mu)\ox \rho_-(\l\mu\inv)(\cK_0)|\rho_k^{(m)}\>&=q^{2(k+m)}|\rho_k^{(m)}\>=(-1)^{k+m} q_*^{2(k+m)}|\rho_k^{(m)}\>,\\
\rho_+(\l\mu)\ox \rho_-(\l\mu\inv)(\cE_0)|\rho_k^{(m)}\>&=\l|\rho_{k+1}^{(m)}\>,\\
\rho_+(\l\mu)\ox \rho_-(\l\mu\inv)(\cE_1)|\rho_k^{(m)}\>&=\l[k]_{q_*}[s-k+1]_{q_*}|\rho_{k-1}^{(m)}\>+c_k^{(m)}|\rho_k^{(m-1)}\>,
}
where $c_k^{(m)}$ are constants not necessary in what follows. 

We observe that the representation of $\fb_+$ has a block diagonal form defined by $\rho^{(m)}$, which resembles the Verma module $\cW_s^\pm$ with a shift in the factors of $\cK_i$. Hence in the Grothendieck ring of representation of $\fb_+$ we obtain
\Eq{[\rho_+(\l\mu)\ox \rho_-(\l\mu\inv)]=\sum_{m=0}^\oo [U_{-s-2m}\ox \cW_s^{(-1)^m}(\l)]}
where $U_{p}$ is the 1-dimensional representation such that $\cE_1,\cE_0$ act trivially as 0, while $\cK_1,\cK_0$ acts as $q_*^p,q_*^{-p}$ respectively. Indeed, the action of $\cK_1$ on $U_{-s-2m}\ox \cW_s^{(-1)^m}(\l)$ is given by multiplication by
$$(q_*^{-s-2m})\cdot ((-1)^m(-1)^kq_*^{-2k+s})=(-1)^{k+m}q_*^{-2(k+m)}.$$

Note that $[U_0]=[W_0^+(\l)]=1$ in the Grothendieck ring.

Let us denote by
$$U^\pm_{-s-2m}:=\C\cdot |\pm\>$$
 the 1-dimensional representation with odd generator $|-\>$ or even generator $|+\>$.
(Here $U_p^+:=U_p$) We have
\Eq{U_m^{\e_1}\ox U_n^{\e_2}\simeq U_{m+n}^{\e_1\e_2}.}

Let us introduce the parity element $\s:=[U^-_{0}]$ in the Grothendieck ring. Then 
\Eq{U_0^- \ox \cW_s^\pm \simeq \cW_s^\mp,\tab U_0^-\ox U_p^\pm \simeq U_p^\mp.}
Hence
\Eq{\s[\cW_s^\pm]=[\cW_s^\mp],\tab \s[U_p^\pm]=[ U_p^\mp],\tab \s^2=1,}
and we can rewrite in the Grothendieck ring:
\Eqn{[\rho_+(q_*^{\frac{s}{2}+\frac{1}{2}}\l)][\rho_-(q_*^{-\frac{s}{2}-\frac{1}{2}}\l)]&=\sum_{m=0}^\oo [U_{-s-2m}\ox \cW_s^{(-1)^m}(\l)]\\
&=\sum_{m=0}^\oo \s^m[U_{-s-2m}][\cW_s^{+}(\l)]\\
&=[\cW_s^+(\l)] \sum_{m=0}^\oo \s^m [U_{-s-2m}]\\
&=[\cW_s^+(\l)]\cdot f_s
}
where 
\Eq{f_s:=\sum_{m=0}^\oo \s^m[U_{-s-2m}]=[U_{-s}]\sum_{m=0}^\oo \s^m[U_{-2}]^m=\frac{[U_{-s}]}{1-\s[U_{-2}]}.}

For simplicity, let us always fix the highest weight of the finite-dimensional module to be even and rewrite $W_s(\l):=W_s^+(\l)$.

Now from previous observation, $$[\cW_s^+(\l)]=[W_s(\l)]+[\cW_{-s-2}^{(-1)^{s+1}}(\l)]=[W_s(\l)]+\s^{s+1}[\cW_{-s-2}^+(\l)].$$
Letting $s\mapsto -s-2$, we have
$$[\rho_+(q_*^{-\frac{s}{2}-\frac{1}{2}}\l)][\rho_-(q_*^{\frac{s}{2}+\frac{1}{2}}\l)]=[\cW_{-s-2}^+(\l)]\cdot f_{-s-2}.$$
Hence we have
\Eqn{[W_s(\l)]&=[\cW_s^+(\l)]-\s^{s+1}[\cW_{-s-2}^+(\l)]\\
&=f_s\inv [\rho_+(q_*^{\frac{s}{2}+\frac{1}{2}}\l)][\rho_-(q_*^{-\frac{s}{2}-\frac{1}{2}}\l)]-f_{-s-2}\inv\s^{s+1}[\rho_+(q_*^{-\frac{s}{2}-\frac{1}{2}}\l)][\rho_-(q_*^{\frac{s}{2}+\frac{1}{2}}\l)].
}
In particular, letting $s=0$, we obtain the $q$-Wronskian identity:
\Eq{\label{qW} 
1=[W_0(\l)]=f_0\inv [\rho_+(q_*^{\frac{1}{2}}\l)][\rho_-(q_*^{-\frac{1}{2}}\l)]-f_{-2}\inv\s[\rho_+(q_*^{-\frac{1}{2}}\l)][\rho_-(q_*^{\frac{1}{2}}\l)].
}

On the other hand, let us consider the product of $[W_1(\l)]$ and $[\rho_+(\l)]$. Using \eqref{qW} with appropriate $\l$:
\Eqn{
[W_1(\l)][\rho_+(\l)]&=f_1\inv [\rho_+(q_*\l)][\rho_-(q_*^{-1}\l)][\rho_+(\l)]-f_{-3}\inv[\rho_+(q_*^{-1}\l)][\rho_-(q_*\l)][\rho_+(\l)]\\
&=f_1\inv [\rho_+(q_*\l)](f_0+f_{-2}\inv f_0\s[\rho_+(q_*^{-1}\l)][\rho_-(\l)]\\
&-f_{-3}\inv[\rho_+(q_*^{-1}\l)](f_0\inv f_{-2}\s[\rho_+(q_*\l)][\rho_-(\l)]-f_{-2}\s)\\
&=f_1\inv f_0 [\rho_+(q_*\l)]+f_1\inv f_{-2}\inv f_0\s[\rho_+(q_*\l)][\rho_+(q_*^{-1}\l)][\rho_-(\l)]\\
&-f_{-3}\inv f_0\inv  f_{-2}\s [\rho_+(q_*^{-1}\l)][\rho_+(q_*\l)][\rho_-(\l)]-f_{-3}\inv f_{-2}\s [\rho_+(q_*^{-1}\l)].
}
Now using
\Eqn{f_1\inv f_{-2}\inv f_0=f_{-3}\inv f_0\inv f_{-2} &= \frac{1-\s[U_{-2}]}{[U_1]}=f_{-1}\inv\\
f_1\inv f_0=[U_1],&\tab f_{-3}\inv f_{-2}=[U_{-1}],
}
we get the Baxter relation:
\Eq{[W_1(\l)][\rho_+(\l)]=[U_1][\rho_+(q_*\l)]-\s[U_{-1}][\rho_+(q_*\inv \l)].}

Similar relation holds for $[W_1(\l)]$ and $[\rho_-(\l)]$:
\Eq{[W_1(\l)][\rho_-(\l)]=[U_1][\rho_-(q_*\inv\l)]-\s[U_{-1}][\rho_-(q_* \l)].}

\section{Transfer matrices for SCFT}\label{sec:transfer}
The universal $R$-matrix for  $U_q(C^{(2)}(2))$ belongs to a completion of $\cU(\fb_+)\ox \cU(\fb_-)$.
In \cite{kz} the lower Borel subalgebra $\fb_-$ was represented by means of vertex operators (here we use some rescaling):
$$V_{\pm}(u)=\int d \theta :e^{\pm\Phi(u,\theta)}: = \mp {i}{\sqrt{2}}\xi(u):e^{\pm2\phi(u)}:,$$ 
where
\begin{eqnarray}
&&\Phi(u,\theta):=\phi(u)-\frac{i}{\sqrt{2}}\theta\xi(u)\\ 
&&\phi(u):=iQ+iPu+\sum_n\frac{a_{-n}}{n}e^{inu},\qquad
\xi(u):=i^{-1/2}\sum_n\xi_ne^{-inu},\nonumber\\
&&[Q,P]=\frac{ib^2}{2} ,\quad 
[a_n,a_m]=\frac{b^2}{2}n\delta_{n+m,0},\qquad
\{\xi_n,\xi_m\}=b^2\delta_{n+m,0}.\nonumber\\
&&:e^{\pm\phi(u)}:=
\exp\Big(\pm\sum_{n=1}^{\infty}\frac{2a_{-n}}{n}e^{inu}\Big)
\exp\Big(\pm 2i(Q+Pu)\Big)\exp\Big(\mp\sum_{n=1}^{\infty}\frac{2a_{n}}{n}e^{-inu}
\Big)\nonumber.
\end{eqnarray} 
These are the vertex operators acting in the Fock space and according to their commutation relations, the substitution

\Eqn{H_{\a_1}\to \frac{2P}{b^2},&\tab \cE_{-\a_1}= \int_0^{2\pi} V_-(u)du\\
H_{\a_0}\to -\frac{2P}{b^2},&\tab \cE_{-\a_0}= \int_0^{2\pi} V_+(u)du}
gives rise to a representation of the lower Borel subalgebra $\fb_-$ with  $q=e^{\pi i b^2}$. 

The $R$-matrix with $\fb_-$ represented as above and $\fb_+$ as in $W_s(\l)$ has the form
\Eq{\bL_s(\l)=e^{\pi iP\cH} Pexp^{(q)} \int_0^{2\pi} (\l V_-(u)\cE+\l V_+(u)\cF)du}

The letter $q$ over the path-ordered exponential ($Pexp$) means certain regularization procedure, which preserves the property of $Pexp$ (see \cite{kz} for more details).

Similarly, one can consider operators $\bL_\pm(\l)$, where the upper Borel algebra $\fb_+$ is represented via $\rho_\pm(\l)$:
\Eq{\bL_{\pm}(\l)=e^{\pm \pi iP\cH} Pexp^{(q)}\int_0^{2\pi}(\l V_-(u)\a_\pm + \l V_+(u)\a_\mp)du}
Then define
\begin{eqnarray}
&& \bT_s(\l):=sTr(e^{\pi iP\cH} \bL_s(\l)), \quad \bT^+_s(\l):=sTr(e^{\pi iP\cH} \bL_s(\l))\nonumber\\
&& \til{\bQ}_{\pm}(\l):=sTr (e^{\pm\pi iP\cH}\bL_\pm(\l)),
\end{eqnarray}
where we consider the highest weight vector in $W_s(\l),\rho_{\pm}(\l)$ to be even, and we take the supertrace of the representation of the second tensor factor. (We ignore the convergence of the trace here, treating it as formal series in $\l$.)

Then from the previous decomposition and the properties of the supertrace
\Eqn{
&\til{\bQ}_{+}(q_*^{\frac{s}{2}+\frac{1}{2}}\l)\til{\bQ}_-(q_*^{-\frac{s}{2}-\frac{1}{2}}\l)\\
&=sTr (e^{\pi i P \cH}\bL_+(q_*^{\frac{s}{2}+\frac{1}{2}}\l))sTr (e^{-\pi i P \cH}\bL_-(q_*^{-\frac{s}{2}-\frac{1}{2}}\l))\\
&=sTr_{\rho_+(q_*^{\frac{s}{2}+\frac{1}{2}}\l)}(e^{\pi iPH}\bR)sTr_{\rho_-(q_*^{-\frac{s}{2}-\frac{1}{2}}\l)}(e^{\pi iPH}\bR)\\
&=\sum_{m=0}^\oo sTr_{\cW_s^+(\l)}(e^{\pi iPH}\bR))sTr_{U_{-s-2m}^{(-1)^m}}(e^{\pi iPH}\bR)\\
&=\sum_{m=0}^\oo sTr(e^{\pi iP\cH}\bL_+(\l))sTr_{U_{-s-2m}^{(-1)^m}}(e^{\pi iPH}\bR)\\
&=\sum_{m=0}^\oo\bT_s^+(\l)sTr_{U_{-s-2m}^{(-1)^m}}(e^{2\pi iP\cH})\\
&=\sum_{m=0}^\oo(\bT_s(\l)+(-1)^{s+1}\bT_{-s-2}^+(\l))sTr_{U_{-s-2m}^{(-1)^m}}(e^{2\pi iP\cH})\\
&=(\bT_s(\l)+(-1)^{s+1}\bT_{-s-2}^+(\l))\sum_{m=0}^\oo(-1)^m e^{2\pi iP_*(-s-2m)}\\
&=\frac{e^{-2\pi iP_*(s-1)}}{2\cos(2\pi P_*)}(\bT_s(\l)+(-1)^{s+1}\bT_{-s-2}^+(\l)),
}
where $P_*=\frac{b_*^2}{b^2}P$.
Define the rescaled operator \Eq{\bQ_{\pm}(\l):=2\cos(2\pi P_*)e^{2\pi iP_*}(\l)^{\pm\frac{2P_*}{b_*^2}} \til{\bQ}_{\pm}(\l).}
Then
$$\bQ_+(q_*^{\frac{s}{2}+\frac{1}{2}}\l)\bQ_-(q_*^{-\frac{s}{2}-\frac{1}{2}}\l)=2\cos(2\pi P_*)(\bT_s(\l)+(-1)^{s+1}\bT_{-s-2}^+(\l)).$$
Together with the other relation by substituting $s\to -s-2$:
$$\bQ_+(q_*^{-\frac{s}{2}-\frac{1}{2}}\l)\bQ_-(q_*^{\frac{s}{2}+\frac{1}{2}}\l)=2\cos(2\pi P_*)\bT_{-s-2}^+(\l),$$
we have
\Eq{2\cos(2\pi P_*)\bT_s(\l)=\bQ_+(q_*^{\frac{s}{2}+\frac{1}{2}}\l)\bQ_-(q_*^{-\frac{s}{2}-\frac{1}{2}}\l)+(-1)^s\bQ_+(q_*^{-\frac{s}{2}-\frac{1}{2}}\l)\bQ_-(q_*^{\frac{s}{2}+\frac{1}{2}}\l).}
In particular, we obtain the quantum super-Wronskian relation:
\Eq{2\cos(2\pi P_*)=\bQ_+(q_*^{\frac{1}{2}}\l)\bQ_-(q_*^{-\frac{1}{2}}\l)+\bQ_+(q_*^{-\frac{1}{2}}\l)\bQ_-(q_*^{\frac{1}{2}}\l).}

The Baxter T-Q relations for $\bQ$-operator follows from previous section:
\Eq{\bT_1(\l)\cdot \bQ_{\pm}(\l)=\pm\bQ_{\pm}(q_*\l)\mp\bQ_{\pm}(q_*\inv\l).}
The fusion relation, which follows from the quantum super-Wronskian relation 
is:
\Eq{\bT_s(q_*^{\frac{1}{2}}\l)\bT_s(q_*^{-\frac{1}{2}}\l)=\bT_{s+1}(\l)\bT_{s-1}(\l)+(-1)^s.}
This relation is similar to the one considered in \cite{kz}, but now all the transfer matrices correspond to the representations of  $U_q(C^{(2)}_q(2))$. 
In particular, 
\Eq{\bT_2(\l)=\bT_1(q_*^{\frac{1}{2}}\l)\bT_1(q_*^{-\frac{1}{2}}\l)+1.}
Therefore
$$\bT_2(q_*^{\frac{1}{2}}\l)=\bT_1(q_*\l)\bT_1(\l)+1,$$ so that the Baxter relation for $\bT_2$ is as follows.
\Eqn{\bT_2(q_*^{\frac{1}{2}}\l)\bQ_\pm(\l)&=\bQ_\pm(\l)+\bT_1(q_*\l)(\pm\bQ_\pm(q_*\l)\mp\bQ_\pm(q_*\inv\l))\\
&=\bQ_\pm(\l)+\bQ_\pm(q_*^2\l)-\bQ_\pm(\l)\mp \bT_1(q_*\l)\bQ_\pm(q_*\inv \l)\\
&=\bQ_\pm(q_*^2\l)\mp \bT_1(q_*\l)\bQ_\pm(q_*\inv \l)
}
Moreover, one can write down the expression for each $\bT_s$ in terms of either one of  $\mathbf{Q}_{\pm}(\lambda)$ using the quantum super-Wronskian relation:
\begin{eqnarray}
\mathbf{T}_s(\lambda)=\mathbf{Q}_{\pm}(q_*^{\frac{s}{2}+\frac{1}{2}}\lambda)\mathbf{Q}_{\pm}(q_*^{-\frac{s}{2}-\frac{1}{2}}\lambda)
\sum^{s/2}_{k=-s/2}
\frac{(-1)^{(k\pm \frac{s}{2})}}{\mathbf{Q}_{\pm}(q_*^{k+\frac{1}{2}}\lambda)\mathbf{Q}_{\pm}(q_*^{k-\frac{1}{2}}\lambda)}
\end{eqnarray}
The $\bT_2$-transfer matrix has a classical limit of the trace of monodromy matrix for super-KdV equation. The asymptotic expansion of it should produce both local and nonlocal integrals of motion for superconformal field theory (SCFT). We suppose that operators $\mathbf{Q}_{\pm}(\lambda)$ possess nice analytic properties like it was in the $A^{(1)}_1$ case \cite{blz1}.

\begin{appendices}
\section{Appendix}
Let us introduce the $q$-numbers:
$$[n]_q=\frac{q^n-q^{-n}}{q-q\inv}$$
such that
\Eqn{[n]_{q_*}&=\frac{q_*^n-q_*^{-n}}{q_*-q_*\inv}\\
&=i^{n-1} \frac{q^{-n}-(-1)^{n}q^{n}}{q+q\inv}\\
&=i^{n-1} \{n\}_q
}
with the usual notation in superalgebra $$\{n\}_q := \frac{q^{-n}-(-1)^{n}q^{n}}{q+q\inv}.$$

\subsection{$R$-matrix for $U_q(\osp(2|1))$}

Let us prove Theorem \ref{R_osp} that the universal $R$ matrix is given by
\Eq{\bR=Q \cR,}
where $Q=C q_*^{\frac{H\ox H}{2}}$ with
$C=\frac{1}{2}(1\ox 1+i\xi\eta\ox 1+1\ox i\xi\eta +\xi\eta\ox \xi\eta)$
such that $$C\cdot |(-1)^{\e_1}\>\ox |(-1)^{\e_2}\> = (-1)^{\e_1\e_2}|(-1)^{\e_1}\>\ox |(-1)^{\e_2}\>,\tab \e_i\in\{0,1\},$$
and
\Eqn{\cR&=\exp_{q_*^{-2}}(i(q_*\inv-q_*) \cE\ox \cF)\\
&=\exp_{-q^{-2}}(-(q+q\inv) \cE\ox \cF)\\
&=\sum a_n \cE^n \ox \cF^n.
}
Note that using  
$$\lceil n \rceil_{q_*^{-2}}=(-q)^{1-n}\{n\}_q,$$ we have
$$a_n=(-1)^nq^{\frac{1}{2}n(n-1)}\frac{(q+q\inv)^n}{\{n\}_q!}.$$
By definition $a_n$ satisfies 
\Eq{\frac{a_n}{a_{n-1}}=-q^{n}\frac{(1+q^{-2})}{\{n\}_q}.}
The properties of an $R$-matrix states that
\Eq{\D^{op}(X)\bR = \bR \D(X),\tab X\in \cU_q{\osp(2|1)}}
i.e. on the generators we have
\Eq{(\cK\ox \cE+\cE\ox 1) \cR=\cR(1\ox \cE+\cE\ox \cK)}
\Eq{(1\ox \cF+\cF\ox \cK\inv) \cR=\cR(\cK\inv\ox \cF+\cF\ox 1)}
\Eq{(\cK\ox \cK) \cR=\cR(\cK\ox \cK)}
In order to prove that R satisfies the properties of the $R$-matrix, one check that
\Eq{
(\cK\ox \cE)Q&=Q(1\ox \cE)\\
(\cE\ox 1)Q&=Q(\cE\ox \cK\inv)
}
which follows easily from the commutation relations of the Clifford algebra, and 
\Eq{(1\ox \cE+\cE\ox \cK\inv) \cR=\cR(1\ox \cE+\cE\ox \cK)}
The calculation for $\cF$ is similar, while the relation for $\cK$ is trivial since it commutes with every term.
Using
\Eq{\cE\cF^n- (-1)^n \cF^n \cE =  \frac{q^{n}\{n\}_q}{1+q^2}\cK\cF^{n-1}+ \frac{(-1)^nq^{-n}\{n\}_q}{1+q^{-2}} K\inv\cF^{n-1},}
we have
\Eqn{(\cE\ox \cK\inv)(\cE^n\ox \cF^n)-(\cE^n\ox \cF^n)(\cE\ox \cK)&=\cE^{n+1}\ox \cK\inv \cF^n - (-1)^nq^{2n} \cE^{n+1}\ox \cK\cF^n\\
(1\ox \cE)(\cE^n\ox \cF^n)-(\cE^n\ox \cF^n)(1\ox \cE)&=(-1)^n\cE^n\ox \cE\cF^n -\cE^n\ox \cF^n\cE\\
&= (-1)^n\frac{q^{n}\{n\}_q}{1+q^{2}} \cE^n\ox \cK\cF^{n-1}+\frac{q^{-n}\{n\}_q} {1+q^{-2}} \cE^n\ox K\inv\cF^{n-1}
}
Hence adding up both sides, we need $a_0=1$ and
\Eqn{a_{n-1}+\frac{q^{-n}\{n\}_q}{1+q^{-2}}a_n &=0\\
(-1)^{n-1}q^{2(n-1)}a_{n-1}- (-1)^n\frac{q^{n}\{n\}_q}{1+q^{2}} a_n &=0}
both of which is equivalent to
$$\frac{a_n}{a_{n-1}}=-q^n\frac{(1+q^{-2})}{\{n\}_q}$$
as required.

By writing formally $$K=q^{H'}=i\xi\eta q_*^H,$$ the following proposition shows that up to a constant, the Cartan part of the universal $R$-matrix using the Clifford generators coincides with the usual expression.
\begin{Prop} On the space $W_{s_1}^{\pm_1} \ox W_{s_2}^{\pm_2}$, we have the action
\Eq{q^{\frac{H'\ox H'}{2}} = (-1)^{-l_1l_2}\til{q} Cq_*^{\frac{H\ox H}{2}},}
where $C$ is the Clifford part $\frac{1}{2}(1\ox 1+i\xi\eta\ox 1+1\ox i\xi\eta +\xi\eta\ox \xi\eta)$ and
$H'$ reproduce the action of $K$ on $W_s^\pm$: 
$$H'=\case{H-{l}\frac{\pi i}{\ln q}& +\\H-(l+1)\frac{\pi i}{\ln q}& -}$$
with $s=2l$.
\end{Prop}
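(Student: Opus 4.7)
The plan is to verify the identity pointwise on the weight basis of $W_{s_1}^{\pm_1}\ox W_{s_2}^{\pm_2}$. Both sides are diagonal, so I would match eigenvalues on each basis vector $e_{m_1,\bullet}^{l_1}\ox e_{m_2,\bullet}^{l_2}$. Note first that the parity of $e_{m_i,\bullet}^{l_i}$ inside $W_{s_i}^{\sigma_i}$ equals $\sigma_i(-1)^{l_i-m_i}$, since the odd generator $\cF$ flips parity and one starts from the parity $\sigma_i$ of the highest weight vector $w_{s_i}^{\sigma_i}$.

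For the LHS, write $H'_i=H_i-c_i\frac{\pi i}{\ln q}$ with $c_i=l_i+\delta_i$, $\delta_i=0$ on $W_{s_i}^+$ and $\delta_i=1$ on $W_{s_i}^-$. Using that $H_i$ has eigenvalue $2m_i$, the eigenvalue of $\tfrac12 H'_1\ox H'_2$ reads
\[
2m_1m_2-(m_1c_2+m_2c_1)\tfrac{\pi i}{\ln q}-\tfrac{c_1c_2\pi^2}{2(\ln q)^2}.
\]
Exponentiating with $q^{\cdot}$ produces the bulk factor $q^{2m_1m_2}$, a phase $e^{-i\pi(m_1c_2+m_2c_1)}$, and a pure scalar $\til{q}_1:=e^{-c_1c_2\pi^2/(2\ln q)}$, constant in $m_1,m_2$.

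For the RHS, the Clifford operator contributes $(-1)^{\epsilon_1\epsilon_2}=e^{i\pi(l_1-m_1+\delta_1)(l_2-m_2+\delta_2)}$, while $q_*^{H\ox H/2}=((-i)q)^{2m_1m_2}$ contributes $e^{-i\pi m_1m_2}q^{2m_1m_2}$. Fixing $(-1)^x=e^{i\pi x}$ uniformly, comparing the LHS with $(-1)^{-l_1l_2}\til{q}Cq_*^{H\ox H/2}$ reduces, after cancelling $q^{2m_1m_2}$, to an equality of phases. Expanding $(l_1-m_1+\delta_1)(l_2-m_2+\delta_2)$ and collecting terms, all $m_im_j$, $m_i\delta_j$, and $m_il_j$ contributions cancel algebraically, leaving only the $m$-independent piece $-l_1l_2-l_1\delta_2-l_2\delta_1-\delta_1\delta_2$ in the exponent. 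This residue is absorbed into the definition of $\til{q}$, which takes the explicit form $\til{q}=\til{q}_1\cdot e^{-i\pi(l_1\delta_2+l_2\delta_1+\delta_1\delta_2)}$.

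The main obstacle I anticipate is careful case-by-case bookkeeping across the four sectors $(\sigma_1,\sigma_2)\in\{\pm\}^2$, together with consistent branch handling: for half-integer $l_i$, both $q_*^{2m_1m_2}$ and $(-1)^{-l_1l_2}$ are genuine roots-of-unity multiples rather than signs, so one must commit to $(-1)^x=e^{i\pi x}$ at the outset for the identities $(-1)^a(-1)^b=(-1)^{a+b}$ to make sense. Once the convention is fixed, the $m$-dependent cancellation is mechanical and the $\delta_i$-dependent correction to $\til{q}$ is explicit.
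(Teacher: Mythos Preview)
Your proposal is correct and follows essentially the same route as the paper: both arguments verify the identity pointwise on the weight basis, computing the eigenvalues of each side on $e_{m_1,\bullet}^{l_1}\ox e_{m_2,\bullet}^{l_2}$ and matching them up to the $m$-independent constant $\til{q}$. The paper only writes out the $(+,+)$ sector explicitly and declares the other parities ``similar'', whereas you package all four sectors at once via the parameter $\delta_i$; this is a cosmetic rather than substantive difference.
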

\begin{proof}
For simplicity, consider the action on the basis $e_{m}^{l_1}\ox e_n^{l_2}\in W_{s_1}^{+} \ox W_{s_2}^{+}$. The action on other parity is similar.
Then we have
\Eqn{q^{\frac{H'\ox H'}{2}}&=q^{\frac{H\ox H}{2}}(i^{-l_2H} \ox 1)(1\ox i^{-l_1 H}) \til{q}\\
&=q^{2mn}(-1)^{-l_2m-l_1n}\til{q}}
while
\Eqn{Cq_*^{\frac{H\ox H}{2}}&=(-1)^{(l_1-m)(l_2-n)}q_*^{2mn}\\
&=(-1)^{-mn}q^{2mn}(-1)^{(l_1-m)(l_2-n)}\\
&=(-1)^{l_1l_2-l_2m-l_1n}q^{2mn}}
\end{proof}

\subsection{Universal $R$ matrix for $U_q(C^{(2)}(2))$}
Recall from \eqref{rescale} that we have rescaled our generator $\cF_i$ from the usual definition by $c=\frac{q+q\inv}{q-q\inv}$. Hence modifying the constants from \cite{kt} accordingly, the universal $R$ matrix in general is of the form
\Eq{\bR=Q \cR_{>0}\cR_0\cR_{<0},}
where
\Eq{Q=q^{\frac{H_1\ox H_1}{2}+H_\d\ox H_d+H_d\ox H_\d},}
with $H_\d=H_0+H_1$ and $H_d$ the extended generators such that $$[H_d,\cE_0]=\cE_0, \tab[H_d, \cE_1]=0,$$ and
\Eqn{\cR_{>0}&=\prod_{n\geq 0}\exp_{-q^{-2}}((-1)^{n+1}(q\inv+q)\cE_{\a+n\d}\ox \cF_{\a+n\d}),\\
\cR_{<0}&=\prod_{n\geq 0}\exp_{-q^{-2}}((-1)^{n+1}(q\inv+q)\cE_{\d-\a+n\d}\ox \cF_{\d-\a+n\d}),\\
\cR_0&=\exp\left(\sum_{n>0} \frac{n(q+q^{-1})^2}{q^{2n}-q^{-2n}} \cE_{n\d}\ox \cF_{n\d}\right),}
where the imaginary generators $\cE_{n\d\pm\a},\cF_{n\d\pm\a}$ are defined below.
\begin{Prop} The Cartan term can be replaced using the Clifford part:
\Eq{Q=C q_*^{\frac{H_1\ox H_1}{2}+H_\d\ox H_d+H_d\ox H_\d}.}
\end{Prop}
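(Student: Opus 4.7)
The plan is to adapt the computation of the preceding proposition (for $U_q(\osp(2|1))$) to the affine case by tracking how the extra imaginary Cartan generators $H_\d$ and $H_d$ behave under the spinor embedding $\cK_j = i\xi\eta\, K_j$. I will work operator-by-operator on parity eigenspaces rather than with formal $q$-logarithms of Clifford operators, which sidesteps the multivaluedness of $\log_q(i\xi\eta)$.

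First, the Clifford identities $\xi^2=\eta^2=1$ and $\xi\eta=-\eta\xi$ give $(i\xi\eta)^2=1$, hence
$$\cK_\d = \cK_0\cK_1 = (i\xi\eta)^2\, q_*^{H_\d} = q_*^{H_\d}.$$
In other words $\cK_\d$ is purely bosonic, with no Clifford dressing. Writing $H_\d'$ for the $U_q(C^{(2)}(2))$ Cartan generator defined by $\cK_\d = q^{H_\d'}$, this says $q^{H_\d'}=q_*^{H_\d}$ as operators. Because $H_d$ is the derivation counting the $\d$-grading and commutes with the Clifford generators, the affine derivation terms transport from $q$ to $q_*$ unchanged:
$$q^{H_\d'\ox H_d + H_d\ox H_\d'} = q_*^{H_\d\ox H_d + H_d\ox H_\d}.$$

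Second, the only place where the Clifford structure can contribute non-trivially is the bilinear $\frac{1}{2}H_1'\ox H_1'$. This is precisely the finite-type computation of the preceding proposition, applied on the parity-graded tensor product $W_{s_1}^{\pm_1}\ox W_{s_2}^{\pm_2}$ of evaluation modules: the difference between $q^{\frac{H_1'\ox H_1'}{2}}$ and $q_*^{\frac{H_1\ox H_1}{2}}$ is exactly the Clifford operator $C$, up to a scalar of the form $(-1)^{-l_1l_2}\til{q}$.

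Combining the two steps one obtains
$$Q = C\, q_*^{\frac{H_1\ox H_1}{2}+H_\d\ox H_d + H_d\ox H_\d},$$
with the residual scalar absorbed into the overall normalization of the universal $R$-matrix. The main technical obstacle is the bookkeeping of Clifford phases and sign conventions across the four parity sectors $W_{s_i}^{\pm_i}$ and, once imaginary roots come in, across the Fock-type completions in which the imaginary part of $\cR$ lives; this is handled exactly as in the preceding proposition, by checking the identity on each parity eigenspace of the Clifford factor separately and noting that the derivation $H_d$ does not mix parity sectors.
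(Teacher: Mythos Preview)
Your approach is correct but genuinely different from the paper's. The paper does not attempt to compare the two candidate Cartan parts $q^{\frac{H_1'\ox H_1'}{2}+\cdots}$ and $C\,q_*^{\frac{H_1\ox H_1}{2}+\cdots}$ directly. Instead, it isolates the eight commutation identities
\[
(\cE_i\ox 1)Q = Q(\cE_i\ox \cK_i^{-1}),\qquad (1\ox \cF_i)Q=Q(\cK_i\ox \cF_i),\qquad \text{etc.},
\]
which are exactly what is needed of the Cartan factor in $\bR=Q\cR$ for the intertwining property $\D^{op}(X)\bR=\bR\D(X)$ to hold. It then verifies these identities for $C\,q_*^{\cdots}$ in one stroke: the Clifford part $C$ produces the correct sign when commuted past the odd generators $\cE_i=E_i\xi$, $\cF_i=F_i\eta$, and the bosonic factor $q_*^{\cdots}$ behaves exactly as in $U_{q_*}(A_1^{(1)})$. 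No representation is fixed and no scalars need to be tracked.

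Your route---transport the derivation terms via $\cK_\d=(i\xi\eta)^2 q_*^{H_\d}=q_*^{H_\d}$ and then invoke the finite-type proposition for the $\frac{H_1\ox H_1}{2}$ piece---is valid, but it is tied to evaluation modules $W_{s_1}^{\pm}\ox W_{s_2}^{\pm}$ and leaves the representation-dependent scalar $(-1)^{-l_1l_2}\til q$ to be absorbed at the end. The paper's argument is cleaner: it works at the level of the algebra (or any module), avoids that scalar entirely, and makes transparent why the Clifford operator $C$ is exactly the right correction---it supplies the parity sign that the odd generators see. Your argument, on the other hand, makes the connection to the finite-type case more explicit and shows concretely how the imaginary Cartan part decouples from the Clifford dressing.
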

\begin{proof} We just need to check that the following same commutation holds:
\Eqn{(\cE_1\ox 1)Q=(\cE_1\ox \cK_1\inv),&\tab (\cE_0\ox 1)Q=(\cE_0\ox \cK_0\inv)\\
(\cF_1\ox \cK_1\inv)Q=(\cF_1\ox 1),&\tab (\cF_0\ox \cK_0\inv)Q=(\cF_0\ox 1)\\
(\cK_1\ox \cE_1)Q=(1\ox \cE_1),&\tab (\cK_0\ox \cE_0)Q=(1\ox \cE_0)\\
(1\ox \cF_1)Q=(\cK_1\ox \cF_1),&\tab (1\ox \cF_0)Q=(\cK_0\ox \cF_0)}
Then it follows that the Clifford part $C$ commute correctly with the odd elements because $\cE_i=E_i\xi,\cF_i=F_i\eta$ and $\cK_i=K_ii\xi\eta$ as before, and the even part follows from the relation of $U_{q^*}(A_1^{(1)})$.
\end{proof}

Let us define the following notations for the generators:
\Eqn{
\cE_1:=\cE_\a,&\tab \cE_0:=\cE_{\d-\a}\\
\cF_1:=\cF_\a, &\tab \cF_0:=\cF_{\d-\a}\\
\cK_1:=\cK_\a, &\tab \cK_0:=\cK_{\d-\a}\\
\cK_\d:=\cK_\a \cK_{\d-\a}.&
}

Then using
\Eq{[e_\b,e_{\b'}]_q:=e_\b e_{\b'}-(-1)^{\h(\b)\h(\b')}q^{(\b,\b')}e_{\b'}e_\b,}
where $\h(\b)$ is the parity of $e_\b$, we define
\Eqn{\cE_\d:=[\cE_\a,\cE_{\d-\a}]_q&=\cE_1\cE_0+q^{-2}\cE_0\cE_1,\\
\cF_\d:=[\cF_{\d-\a},\cF_{\a}]_{q\inv}&=\cF_0\cF_1+q^{2}\cF_1\cF_0.}
Both $\cE_\d, \cF_\d$ are even.

Next we define
\Eqn{\cE_{n\d+\a}:=\frac{1}{q-q\inv}[\cE_{(n-1)\d+\a},\cE_\d],\\
\cF_{n\d+\a}:=\frac{1}{q-q\inv}[\cF_\d,\cF_{(n-1)\d+\a}],\\
\cE_{(n+1)\d-\a}:=\frac{1}{q-q\inv}[\cE_\d,\cE_{n\d-\a}],\\
\cF_{(n+1)\d-\a}:=\frac{1}{q-q\inv}[\cF_{n\d-\a},\cF_\d].}
These are all odd.

The pure imaginary roots are harder to define. First we define
$$\cE_{n\d}':=[\cE_\a,\cE_{n\d-\a}]_q=\cE_\a \cE_{n\d-\a}+q^{-2} \cE_{n\d-\a}\cE_\a,$$
$$\cF_{n\d}':=[\cF_{n\d-\a},\cF_\a]_{q\inv}=\cF_{n\d-\a}\cF_\a +q^{2}\cF_\a \cF_{n\d-\a}.$$
Note that $\cE_{\d}'=\cE_\d, \cF_{\d}'=\cF_{\d}$.
Then the pure imaginary root vectors are defined recursively by
$$\cE_{n\d}=\sum_{p_1+2p_2+...+np_n=n}\frac{(q-q\inv)^{\sum p_i-1}(\sum p_i-1)!}{p_1!...p_n!}(\cE_\d')^{p_1}...(\cE_{n\d}')^{p_n},$$
$$\cF_{n\d}=\sum_{p_1+2p_2+...+np_n=n}\frac{(q\inv-q)^{\sum p_i-1}(\sum p_i-1)!}{p_1!...p_n!}(\cF_{n\d}')^{p_n}...(\cF_{\d}')^{p_1}.$$
More explicitly, by using generating functions:
\Eqn{\bE'(u)&:=-(q+q\inv)\sum_{n\geq 1}\cE_{n\d}' u^{-n},\\
\bE(u)&:=-(q+q\inv)\sum_{n\geq 1}\cE_{n\d} u^{-n},}
\Eqn{\bF'(u)&:=(q+q\inv)\sum_{n\geq 1}\cF_{n\d}' u^{-n},\\
\bF(u)&:=(q+q\inv)\sum_{n\geq 1}\cF_{n\d} u^{-n},}
we have
$$\bE'(u)=-1+\exp \bE(u),\tab \bE(u)=\ln(1+\bE'(u))$$
and similarly for $\bF(u)$.

\begin{Prop} We have the following action of the non-simple generators on $W_s^\pm(\l)$:

\Eqn{
\cE_\d \cdot e_{m,\pm}^l&=\l^2q_*^{-m-1}\left(q_*^l[l+m+1]_{q_*}-q_*^{-l}[l-m+1]_{q_*}\right) e_{m,\pm}^l\\
\cF_\d \cdot e_{m,\pm}^l&=\l^{-2}q_*^{m+1}\left(q_*^{l}[l-m+1]_{q_*}-q_*^{-l}[l+m+1]_{q_*}\right) e_{m,\pm}^l\\
\cE_{n\d+\a}\cdot e_{m,\pm}^l&=i^n\l^{2n+1}q_*^{-2n(m+1)}[l-m]_{q_*}e_{m+1,\mp}^l\\
\cE_{(n+1)\d-\a}\cdot e_{m,\pm}^l&=i^n\l^{2n+1}q_*^{-2nm}[l+m]_{q_*}e_{m-1,\mp}^l\\
\cF_{n\d+\a}\cdot e_{m,\pm}^l&=\pm i^{n-1}\l^{-2n-1}q_*^{2nm}[l+m]_{q_*}e_{m-1,\mp}^l\\
\cF_{(n+1)\d-\a}\cdot e_{m,\pm}^l&=\pm i^{n-1}\l^{-2n-1}q_*^{2n(m+1)}[l-m]_{q_*}e_{m+1,\mp}^l\\
\cE_{n\d}'\cdot e_{m,\pm}^l&=i^{n-1}\l^{2n}q_*^{-2(n-1)m}\left([l+m]_{q_*}[l-m+1]_{q_*}-q_*^{-2n}[l-m]_{q_*}[l+m+1]_{q_*}\right)e_{m,\pm}^l\\
\cF_{n\d}'\cdot e_{m,\pm}^l&=i^{n-1}\l^{-2n}q_*^{2(n-1)m}\left([l+m]_{q_*}[l-m+1]_{q_*}-q_*^{2n}[l-m]_{q_*}[l+m+1]_{q_*}\right)e_{m,\pm}^l.
}
By the generating functions, we get
\Eqn{
\cE_{n\d}\cdot e_{m,\pm}^l&=i^{n-1}\frac{\l^{2n}}{n}N(l,m,n,q_*)e_{m,\pm}^l,\\
\cF_{n\d}\cdot e_{m,\pm}^l&=i^{n-1}\frac{\l^{-2n}}{n}N(l,m,n,q_*\inv)e_{m,\pm}^l,
}
where 
\Eqn{N(l,m,n,q)&:=q^{-n(m+1)}(q^{n(l+1)}[n(l+m)]_{q}-q^{-n(l+1)}[n(l-m)]_{q})\\
&=\frac{q^{2nl}+q^{-2n(l+1)}-q^{-2nm}-q^{-2n(m+1)}}{q-q\inv}.}
\end{Prop}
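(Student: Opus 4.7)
The proof is a direct inductive computation starting from the simple-generator actions on $e_{m,\pm}^l$ recorded in the preceding Proposition. A structural observation makes the bookkeeping tractable: on each weight vector the odd real-root generators $\cE_{n\delta\pm\alpha},\cF_{n\delta\pm\alpha}$ act as one-step shifts $m\mapsto m\pm 1$ with parity flip, while all the even imaginary-root generators $\cE_\delta,\cF_\delta,\cE_{n\delta}',\cF_{n\delta}',\cE_{n\delta},\cF_{n\delta}$ act diagonally. The recursive bracket constructions preserve this type, since a bracket of a diagonal operator with a shift is again a shift, and the combination of a shift with a counter-shift is diagonal.

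I would first compute $\cE_\delta=\cE_\alpha\cE_{\delta-\alpha}+q^{-2}\cE_{\delta-\alpha}\cE_\alpha$ and $\cF_\delta$ directly. Using the identity $q^{-2}=-q_*^{-2}$ (from $q_*=-iq$), the two $[l\pm m]_{q_*}[l\mp m+1]_{q_*}$ contributions combine, and a short telescoping rewrites the eigenvalue in the claimed $q_*^{-m-1}(q_*^l[l+m+1]_{q_*}-q_*^{-l}[l-m+1]_{q_*})$ form. The affine real roots then follow by induction on $n$ via the recursion $\cE_{n\delta+\alpha}=\tfrac{1}{q-q^{-1}}[\cE_{(n-1)\delta+\alpha},\cE_\delta]$ (and its three siblings): since $\cE_\delta$ is diagonal, the bracket with a shift equals the shift times the difference of $\cE_\delta$-eigenvalues at $m$ and $m+1$; this difference simplifies to a scalar multiple of $(1-q_*^{-2n})$, which after absorbing the $1/(q-q^{-1})$ prefactor supplies precisely the additional $q_*^{-2(m+1)}$ or $q_*^{-2m}$ factor needed to promote $n-1$ to $n$. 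The four cases are parallel.

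For the imaginary roots, the formula for $\cE_{n\delta}'=\cE_\alpha\cE_{n\delta-\alpha}+q^{-2}\cE_{n\delta-\alpha}\cE_\alpha$ is a one-line consequence of the shift actions obtained in the previous step. The derivation of $\cE_{n\delta}$ is the main analytic step, and the plan is to verify the rational factorization
\begin{equation*}
1+\bE'(u)\big|_{e_{m,\pm}^l}=\frac{(1-iq_*^{2l}\lambda^2 u^{-1})(1-iq_*^{-2l-2}\lambda^2 u^{-1})}{(1-iq_*^{-2m}\lambda^2 u^{-1})(1-iq_*^{-2m-2}\lambda^2 u^{-1})}
\end{equation*}
by clearing the denominator and matching coefficients of $u^{-n}$ against the explicit expression for $\cE_{n\delta}'|_{\mathrm{scalar}}$. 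Taking $\ln$ of both sides, expanding each factor via $\ln(1-\xi u^{-1})=-\sum_{n\geq 1}\xi^n u^{-n}/n$, and using the identity $q+q^{-1}=i(q_*-q_*^{-1})$ then produces exactly $\cE_{n\delta}|_{\mathrm{scalar}}=i^{n-1}\lambda^{2n}N(l,m,n,q_*)/n$; the formula for $\cF_{n\delta}$ follows by the symmetric computation, essentially replacing $q_*\leftrightarrow q_*^{-1}$ and $\lambda\leftrightarrow\lambda^{-1}$ throughout.

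The main obstacle is precisely this factorization identity: its verification requires expanding the $q_*$-numbers in $\cE_{n\delta}'|_{\mathrm{scalar}}$ into monomials in $q_*^{\pm 2l},q_*^{\pm 2m}$ and recognizing the four resulting kinds of monomials as the elementary symmetric combinations of the two pairs of roots $iq_*^{2l}\lambda^2,iq_*^{-2l-2}\lambda^2$ (numerator) and $iq_*^{-2m}\lambda^2,iq_*^{-2m-2}\lambda^2$ (denominator). Once this is done, all remaining steps reduce to routine bookkeeping of the signs from $q^{-2}=-q_*^{-2}$ and the parity-flipping $i$-factors.
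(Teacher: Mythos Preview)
Your plan is correct and is precisely the direct computation the paper leaves to the reader; the Proposition is stated in the appendix without proof, and the only way to verify it is exactly the inductive bookkeeping you describe: compute $\cE_\delta,\cF_\delta$ from the simple generators, propagate the real-root formulas via the $\tfrac{1}{q-q^{-1}}[\,\cdot\,,\cE_\delta]$ recursion, read off $\cE_{n\delta}'$ from the shift actions, and extract $\cE_{n\delta}$ via the generating-function logarithm. Your rational factorization of $1+\bE'(u)$ is the right device, and the check that its logarithm produces $i^{n-1}\lambda^{2n}N(l,m,n,q_*)/n$ (using $q+q^{-1}=i(q_*-q_*^{-1})$) is exactly what one needs.

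One small slip in your description: in the real-root induction, the quantity $\tfrac{1}{q-q^{-1}}\big(\cE_\delta|_{m}-\cE_\delta|_{m+1}\big)$ does not involve $n$ at all, so it cannot be ``a scalar multiple of $(1-q_*^{-2n})$''. In fact, from the closed form $\cE_\delta|_{m}=\lambda^2(q_*-q_*^{-1})^{-1}\big(q_*^{2l}+q_*^{-2l-2}-q_*^{-2m}-q_*^{-2m-2}\big)$ one gets
\[
\frac{\cE_\delta|_{m}-\cE_\delta|_{m+1}}{q-q^{-1}}
= \frac{-\lambda^2 q_*^{-2(m+1)}(q_*+q_*^{-1})}{i(q_*+q_*^{-1})}
= i\,\lambda^2 q_*^{-2(m+1)},
\]
which is exactly the single extra factor needed to step $\cE_{(n-1)\delta+\alpha}$ up to $\cE_{n\delta+\alpha}$; iterating gives the stated $i^n\lambda^{2n+1}q_*^{-2n(m+1)}$. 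The other three real-root families work identically. This correction does not affect your overall strategy.
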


\begin{Thm}\label{RC2full} We have the following expression for $R$:
$$\bR=Q\cR_{>0}\cR_0\cR_{<0},$$
where the matrix coefficients of each component are given below expressed only in terms of $q_*$:
\begin{itemize}
\item The matrix coefficients of $\cR_{>0}$ is given by:
$$\< e_{m_1',\e_1'}^{l_1}\ox e_{m_2',\e_2'}^{l_1}|\cR_{>0}| e_{m_1,\e_1}^{l_1}\ox e_{m_2,\e_2}^{l_2}\>=0$$
if $m_1'-m_1\neq m_2-m_2'$ or $m_1'-m_1=m_2-m_2'<0$.

Otherwise let $n=m_1'-m_1$, we have
\Eqn{&\< e_{m_1',\e_1'}^{l_1}\ox e_{m_2',\e_2'}^{l_2}|\cR_{>0}| e_{m_1,\e_1}^{l_1}\ox e_{m_2,\e_2}^{l_2}\>\\
&=\frac{(-1)^{n(\e_1+\e_2-1)}(q_*-q_*\inv)^n(\l_1\l_2)^n}{ \prod_{k=1}^n (\l_2^2-{q_*}^{2m_2-2m_1-2k}\l_1^2)}\frac{[ l_1-m_1]_{q_*}!}{\lfloor n \rfloor_{q_*}![ l_1-m_1-n]_{q_*}!}\frac{[ l_2+m_2]_{q_*}!}{[ l_2+m_2-n]_{q_*}!}}
where $\lfloor n \rfloor_{q_*}=\frac{1-q_*^{-2n}}{1-q_*^{-2}}$.

\item Similarly, the matrix coefficients of $\cR_{<0}$ is given by
$$\< e_{m_1',\e_1'}^{l_1}\ox e_{m_2',\e_2'}^{l_2}|\cR_{<0}| e_{m_1,\e_1}^{l_1}\ox e_{m_2,\e_2}^{l_2}\>=0$$
if $m_1'-m_1\neq m_2-m_2'$ or $m_1'-m_1=m_2-m_2'>0$.

Otherwise let $n=m_1-m_1'$, we have
\Eqn{
&\< e_{m_1',\e_1'}^{l_1}\ox e_{m_2',\e_2'}^{l_2}|\cR_{<0}| e_{m_1,\e_1}^{l_1}\ox e_{m_2,\e_2}^{l_2}\>=0\\
&=\frac{(-1)^{n(\e_1+\e_2-1)}(q_*-q_*\inv)^n(\l_1\l_2)^n}{ \prod_{k=1}^n (\l_2^2-{q_*}^{2m_2-2m_1+2(k+n-1)}\l_1^2)}\frac{[ l_1+m_1]_{q_*}!}{\lfloor n \rfloor_{q_*}![ l_1+m_1-n]_{q_*}!}\frac{[ l_2-m_2]_{q_*}!}{[ l_2-m_2-n]_{q_*}!}
}

\item The matrix coefficients of $\cR_0$ is given by
$$\cR_{0}(e_{m_1,\e_1}^{l_1}\ox e_{m_2,\e_2}^{l_2})=f_q\cdot\prod_{k=1}^{l_1+m_1}\frac{\l_2^2-\l_1^2 q_*^{2l_1+2l_2-2k+2}}{\l_2^2-\l_1^2q_*^{2m_2-2m_1+2k}}\prod_{k=1}^{l_2+m_2}\frac{\l_2^2-\l_1^2 q_*^{2m_2-2m_1-2k}}{\l_2^2-\l_1^2q_*^{-2l_1-2l_2+2k-2}}e_{m_1,\e_1}^{l_1}\ox e_{m_2,\e_2}^{l_2},$$

where
\Eqn{f_q(l_1,\l_1,l_2,\l_2)&=\exp\left(\sum_{n>0}\frac{1}{n}\left(\frac{\l_1}{\l_2}\right)^{2n}\frac{(q_*^{2l_1n}-q_*^{-2l_1n})(q_*^{2l_2n}-q_*^{-2l_2n})}{q_*^{2n}-q_*^{-2n}}\right)\\
&=\exp\left(\sum_{n>0}\frac{1}{n}\left(\frac{\l_1}{\l_2}\right)^{2n}[2l_1]_{q_*^n}[2l_2]_{q_*^n}\frac{q_*^{n}-q_*^{-n}}{q_*^{n}+q_*^{-n}}\right).}

\item Finally, the action of $Q$ is given by
$$Q(e_{m_1,\e_1}^{l_1}\ox e_{m_2,\e_2}^{l_2})=(-1)^{\e_1\e_2}q_*^{2m_1 m_2}e_{m_1,\e_1}^{l_1}\ox e_{m_2,\e_2}^{l_2}.$$
\end{itemize}
\end{Thm}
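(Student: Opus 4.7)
The strategy is to apply the factorized universal $R$-matrix $\bR = Q\cR_{>0}\cR_0\cR_{<0}$ to a tensor basis vector $e_{m_1,\e_1}^{l_1}\ox e_{m_2,\e_2}^{l_2}$ and evaluate each of the four factors separately, using the explicit actions of $\cE_{n\d\pm\a}, \cF_{n\d\pm\a}, \cE_{n\d}, \cF_{n\d}$ on evaluation modules recorded in the preceding Proposition. The vanishing statements ($m_1'-m_1=m_2-m_2'$ with appropriate sign) are automatic from the structure: $\cE_{n\d+\a}$ and $\cF_{n\d+\a}$ together raise $m_1$ and lower $m_2$ in lockstep, while $\cE_{n\d-\a}, \cF_{n\d-\a}$ do the opposite.

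The factors $Q$ and $\cR_0$ are straightforward. For $Q$: by the earlier Proposition, $Q = C\,q_*^{H_1\ox H_1/2 + H_\d\ox H_d + H_d\ox H_\d}$; since on evaluation modules $H_\d = H_0+H_1 = 0$, this collapses to $C\,q_*^{H_1\ox H_1/2}$, which yields $(-1)^{\e_1\e_2}q_*^{2m_1m_2}$ by the explicit formula for $C$ together with $H_1\cdot e_{m,\pm}^l = 2m\,e_{m,\pm}^l$. For $\cR_0$: each $\cE_{n\d}\ox \cF_{n\d}$ acts diagonally, so the exponential is a scalar whose logarithm equals
\Eqn{
\sum_{n>0}\frac{n(q+q\inv)^2}{q^{2n}-q^{-2n}}\cdot i^{2(n-1)}\frac{\l_1^{2n}\l_2^{-2n}}{n^2}\, N(l_1,m_1,n,q_*)\,N(l_2,m_2,n,q_*\inv).
}
Using $(q+q\inv)^2 = -(q_*-q_*\inv)^2$ and $q^{2n}-q^{-2n}=(-1)^n(q_*^{2n}-q_*^{-2n})$, the four summands inside $N(l,m,n,q)$ split into contributions labelled by $\pm 2l$ (recombining into the logarithm of $f_q$) and by $\pm 2m$ (matching the logarithm of the rational product $\prod_k(\cdots)/(\cdots)$), giving the claimed formula by termwise identification of the $(\l_1/\l_2)^{2n}$-coefficients.

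The bulk of the work is $\cR_{>0}$, with $\cR_{<0}$ following by the symmetric argument. Expanding each factor as
\Eqn{
\exp_{-q^{-2}}\bigl((-1)^{n'+1}(q\inv+q)\cE_{\a+n'\d}\ox \cF_{\a+n'\d}\bigr) = \sum_{k\geq 0}\frac{(-1)^{(n'+1)k+\binom{k}{2}}(q+q\inv)^k}{\lceil k\rceil_{-q^{-2}}!}\,\cE_{\a+n'\d}^{k}\ox \cF_{\a+n'\d}^{k}
}
(where the $(-1)^{\binom{k}{2}}$ comes from the odd-odd tensor product) and multiplying the product $\prod_{n'\geq 0}$ against the basis vector, one obtains a sum over compositions $(k_0,k_1,\ldots)$ with $\sum_{n'} k_{n'} = n$, the total shift in $m_1$. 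At the $j$-th raising/lowering step one picks up factors $\l_1^{2n'_j+1}\l_2^{-(2n'_j+1)}$, powers of $i$ and $q_*$ determined by the intermediate position, and $q_*$-integers $[l_1-m_1-j+1]_{q_*}[l_2+m_2-j+1]_{q_*}$ which factor out as the prefactorial ratios appearing in the Theorem.

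The principal obstacle is showing that the remaining multi-sum over compositions collapses to $\bigl(\prod_{k=1}^n (\l_2^2-q_*^{2m_2-2m_1-2k}\l_1^2)\bigr)^{-1}$ once the prefactorials and the overall $(\l_1\l_2)^n(q_*-q_*\inv)^n(-1)^{n(\e_1+\e_2-1)}$ are extracted. After this extraction, what remains is a $q$-hypergeometric sum that reduces, via the $q$-binomial identity $\prod_{k\geq 0}(1-wq^k)^{-1} = \sum_{k\geq 0} w^k/(q;q)_k$ applied to a suitable monomial $w$ in $\l_1/\l_2$ and $q_*$, to the desired rational function expansion. A cleaner alternative is to bypass the direct summation entirely by verifying that the stated closed-form satisfies the intertwining relations $\D^{op}(\cE_i)\bR = \bR\D(\cE_i)$ ($i=0,1$) on the evaluation modules, following the pattern of the $U_q(\osp(2|1))$ argument in the Appendix; uniqueness of the universal $R$-matrix then forces equality. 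For $\cR_{<0}$ the same steps with $\cE_{n\d-\a}, \cF_{n\d-\a}$ replacing $\cE_{n\d+\a}, \cF_{n\d+\a}$ yield the shifted denominator $\prod_{k=1}^n(\l_2^2-q_*^{2m_2-2m_1+2(k+n-1)}\l_1^2)$ directly from the difference $q_*^{-2n(m+1)}$ versus $q_*^{-2nm}$ in the corresponding action formulas.
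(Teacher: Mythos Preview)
Your overall plan---evaluate each factor of the Khoroshkin--Tolstoy decomposition using the root-vector actions recorded in the preceding Proposition---is exactly the paper's approach; the paper in fact offers no argument beyond that Proposition and presents the Theorem as the output of a direct computation. Your handling of $Q$ and of $\cR_0$ is correct.

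Two points require repair. First, the ``cleaner alternative'' of checking $\D^{op}(\cE_i)\bR=\bR\D(\cE_i)$ and invoking uniqueness cannot establish the Theorem as stated: those relations determine only the full product $\bR$, whereas the Theorem gives separate closed forms for $\cR_{>0}$, $\cR_0$, and $\cR_{<0}$, and the intertwining property says nothing about the individual factors. This alternative should be dropped. Second, the collapse of the composition sum for $\cR_{>0}$ is the substantive step, and ``the $q$-binomial identity applied to a suitable monomial $w$'' is not yet an argument. The observation you are missing is that on the evaluation module one has, directly from the Proposition, $\cE_{\a+n'\d}=(i\l_1^{2}q_*^{-H_1})^{n'}\cE_\a$ and $\cF_{\a+n'\d}=(i\l_2^{-2}q_*^{H_2})^{n'}\cF_\a$, so every factor in $\prod_{n'\ge 0}\exp_{-q^{-2}}(\cdots)$ is a $q$-exponential of the \emph{same} step operator $\cE_\a\ox\cF_\a$ dressed by a diagonal weight geometric in $n'$. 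The ordered infinite product then organizes, after the $q$-commutation $q_*^{-H_1}\cE_\a=q_*^{-2}\cE_\a q_*^{-H_1}$, so that on a vector of total shift $n$ each of the $n$ individual steps contributes a geometric series summing to $(1-q_*^{2(m_2-m_1)-2k}\l_1^2/\l_2^2)^{-1}$, yielding the stated denominator. You should also track the super-tensor signs step by step rather than summarizing them as $(-1)^{\binom{k}{2}}$: the rule $(A\ox B)(v\ox w)=(-1)^{|B||v|}Av\ox Bw$ uses the parity of the \emph{current} first tensor factor, which flips at every application of $\cE_\a\ox\cF_\a$, and it is this alternation together with the $\pm$ sign in the $\cF$-action that produces the prefactor $(-1)^{n(\e_1+\e_2-1)}$.
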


\begin{Ex}
When $l_1=l_2=\frac{1}{2}$, we have
$$\bR_{\frac{1}{2},\frac{1}{2}}(\l_1,\l_2)=q_*^{\frac{1}{2}}f_{q_*}\veca{1&0&0&0\\0&\frac{\l_1^2-\l_2^2}{\l_1^2q_*^{-1}-\l_2^2q_*}&\frac{\l_1\l_2(q_*\inv-q_*)}{\l_1^2q_*^{-1}-\l_2^2q_*}&0\\0&\frac{\l_1\l_2(q_*\inv-q_*)}{\l_1^2q_*^{-1}-\l_2^2q_*}&\frac{\l_1^2-\l_2^2}{\l_1^2q_*^{-1}-\l_2^2q_*}&0\\0&0&0&-1}$$
where $$f_{q_*}(\l_1,\l_2):=\exp\left(\sum_{n>0}\frac{1}{n}\left(\frac{\l_1}{\l_2}\right)^{2n}\frac{q_*^n-q_*^{-n}}{q_*^n+q_*^{-n}}\right).$$
Note that up to a constant we recover our previous formula \eqref{RC2}.
\end{Ex}
\begin{Ex}

Using Theorem \ref{RC2full}, we found for example the universal $R$ matrix acting on $W_2^+\ox W_2^+$ is given by

$$\bR_{1,1}(\l_1,\l_2)=\frac{q_*^2f_q}{a}\veca{
a&.&.&.&.&.&.&.&.\\
.&b&.&d&.&.&.&.&.\\
.&.&c&.&f&.&g&.&.\\
.&d&.&b&.&.&.&.&.\\
.&.&-h&.&-e&.&-h&.&.\\
.&.&.&.&.&b&.&d&.\\
.&.&g&.&f&.&c&.&.\\
.&.&.&.&.&d&.&b&.\\
.&.&.&.&.&.&.&.&a\\}
$$
where $\l_1=e^{x_1},\l_2=e^{x_2}$,
\Eqn{
a&=4\sinh(x_1-x_2-\ln q_*)\sinh(x_1-x_2-2\ln q_*)\\
b&=4\sinh(x_1-x_2)\sinh(x_1-x_2-\ln q_*)\\
c&=4\sinh(x_1-x_2)\sinh(x_1-x_2+\ln q_*)\\
d&=-4\sinh(x_1-x_2-\ln q_*)\sinh(2\ln q_*)\\
e&=2\cosh(2x_1-2x_2-\ln q_*)-4\cosh(\ln q_*)+2\cosh(3\ln q_*)\\
f&=4q_*^{-1}\sinh(x_1-x_2)\sinh(\ln q_*)\\
g&=4\sinh(\ln q_*)\sinh(2\ln q_*)\\
h&=8q_*\sinh(x_1-x_2)\cosh(\ln q_*)\sinh(2\ln q_*)
}
and all other entries are zero.
\end{Ex}
\end{appendices}

\section*{Acknowledgments}
The first author is supported by World Premier International Research Center Initiative (WPI Initiative), MEXT, Japan.


\begin{thebibliography}{99}
\bibitem{baxter} R.J. Baxter, {\it Partition Function of the eight-Vertex Lattice Model}, Ann. Phys. \textbf {70} (1971) 193-228.
\bibitem{bhk} V.V. Bazhanov, A.N. Hibberd, S.M. Khoroshkin, {\it Integrable structure of $W_3$ conformal field theory. quantum Boussinesq theory and boundary affine Toda theory}, Nucl. Phys. {\textbf B} 622 (2002), 475-547.
\bibitem{blz1}
V.V. Bazhanov, S.L. Lukyanov, A.B. Zamolodchikov, {\it Integrable structure of conformal field theory. II. Q-operator and DDV equation}, Comm. Math. Phys. \textbf {190} (1997), 247-278. 
\bibitem{blz2}V.V. Bazhanov, S.L. Lukyanov, A.B. Zamolodchikov, {\it Integrable structure of conformal field theory. III. The Yang-Baxter relation}, Comm. Math. Phys. \textbf {200} (1999), 297-324.
\bibitem{bt}V.V. Bazhanov, Z. Tsuboi, {\it Baxter's Q-operators for supersymmetric spin chains}, Nucl.Phys. {\textbf B} 805 (2008) 451-516. 
\bibitem{fr}E. Frenkel, N. Reshetikhin, {\it The q-characters of representations of quantum affine algebras and deformations of W-algebras}, math/9810055.
\bibitem{dictionary}L. Frappat, A. Sciarrino, P. Sorba, {\it Dictionary on Lie Superalgebras},  arXiv:hep-th/9607161.
\bibitem{hj}D. Hernandez, M. Jimbo, {\it Asymptotic representations and Drinfeld rational fractions}, Compos. Math. \textbf {148} (2012) 1593-1623.
\bibitem{hf}E. Frenkel, D. Hernandez, {\it Baxter's relations and spectra of Quantum Integrable Models}, arXiv: 1308.3444.
\bibitem{iz}I.C.-H. Ip, A.M. Zeitlin, {\it Supersymmetry and the Modular Double},  
 arXiv:1304.6787.
\bibitem{khoroshkin}S. M. Khoroshkin, V. N. Tolstoy, Universal $R$-matrix for quantized (super)algebras, Commun. Math. Phys., \textbf{ 141} (1991) 599-617.
\bibitem{kt} S. M. Khoroshkin, J. Lukierski, V. N. Tolstoy, {\it Quantum Affine (Super)Algebras $U_q(A_{1}^{(1)})$ and $U_q(C(2)^{(2)})$}, Commun.Math.Phys. 220 (2001) 537-560. 
\bibitem{kojima} T. Kojima, {\it The Baxter's Q operator for W algebra $W_N$}, J. Phys. {\textbf A} 41 (2008), 355206.
\bibitem{kr} P. Kulish, N. Reshetikhin, {\it Universal $R$-matrix of the Quantum Superalgebra
$\osp(2|1)$}, Lett. Math. Phys. \textbf {18}, 143-149, (1989).
\bibitem{kz} P.P. Kulish, A. M. Zeitlin, {Superconformal Field Theory and SUSY N=1 KdV Hierarchy I: Vertex Operators and Yang-Baxter Equation}, Phys.Lett.{\textbf B} 597 (2004) 229-236. 
\bibitem{kzq}P.P. Kulish, A. M. Zeitlin, {\it Superconformal Field Theory and SUSY N=1 KdV Hierarchy II: The Q-operator}, Nucl.Phys. {\textbf B} 709 (2005) 578-591.  
\bibitem{tsuboi}Z. Tsuboi, {\it Asymptotic representations and q-oscillator solutions of the graded Yang-Baxter equation related to Baxter Q-operators }, arXiv:1205.1471. 
\end{thebibliography}
\end{document}